\renewcommand{\em}{\it}
\newcommand{\VV}{\mathbf{V}}
\newcommand{\II}{\mathbf{I}}
\newcommand{\ee}{\mathbf{e}}
\newcommand{\xx}{\mathbf{x}}
\newcommand{\yy}{\mathbf{y}}
\newcommand{\zz}{\mathbf{z}}
\newcommand{\R}{\mathbf{R}}
\newcommand{\E}{\mathbf{E}}
\newcommand{\I}{\mathbf{I}}
\newcommand{\tran}{\mathbf{t}}
\newcommand{\ZZ}{\mathbb{Z}}
\newcommand{\QQ}{\mathbb{Q}}
\newcommand{\RR}{\mathbb{R}}
\newcommand{\CC}{\mathbb{C}}
\newcommand{\PP}{\mathbb{P}}
\newcommand{\GG}{\mathbb{G}}
\newcommand{\bs}{\boldsymbol}
\newcommand{\red}{\textcolor{red}}
\definecolor{darkpastelgreen}{rgb}{0.01, 0.75, 0.24}
\DeclareMathOperator{\cent}{Cent}
\DeclareMathOperator{\sing}{Sing}
\DeclareMathOperator{\sym}{\it Sym}
\DeclareMathOperator{\SO}{\rm SO}
\DeclareMathOperator{\SE}{\rm SE}
\DeclareMathOperator{\tr}{{\rm tr}}
\DeclareMathOperator{\Gal}{Gal}
\DeclareMathOperator{\rank}{rk}
\DeclareMathOperator{\Aut}{Aut}
\newcommand{\galclo}[1]{#1^{\mathrm{gal}}}
\newcommand{\ratnoname}[2]{#1 \dashrightarrow #2}
\newcommand{\mon}{\mathrm{Mon}}
\newcommand{\gal}{\mathrm{Gal}}
\newcommand{\SOCC}{\SO_\CC}
\newcommand{\SECC}{\SE_\CC}
\newcommand{\SERR}{\SE_\RR}
\theoremstyle{definition}
\newtheorem{thm}{Theorem}[section]
\newtheorem{result}{Result}[section]
\newtheorem{defn}[thm]{Definition}
\newtheorem{prop}[thm]{Proposition}
\newtheorem{example}[thm]{Example}
\newtheorem{remark}[thm]{Remark}
\newtheorem{proposition}[thm]{Proposition}
\DeclarePairedDelimiter{\norm}{\lVert}{\rVert}
\newcommand{\rat}[3]{#1 \colon #2 \dashrightarrow #3}
\newcommand\restr[2]{{
  \left.\kern-\nulldelimiterspace 
  #1 
  \vphantom{\big|} 
  \right|_{#2} 
}}
\newcommand{\xdashrightarrow}[2][]{\ext@arrow 0359\rightarrowfill@@{#1}{#2}}
\def\acts{\curvearrowright}
\newcommand{\cam}[2]{[#1\mid #2]}
\newcommand{\wt}[1]{\widetilde{#1}}
\newcommand{\permshort}[4]{\left(\begin{array}{ccc}
#1 & \cdots & #2 \\
#3 & \cdots & #4 
\end{array}\right)}
\renewcommand{\matrix}[1]{\begin{bmatrix} #1 \end{bmatrix}}
\definecolor{blue(pigment)}{rgb}{0.2, 0.2, 0.8}
\newcommand{\deff}[1]{{\em \color{blue(pigment)} #1}}
\newcommand{\Ess}{\mathcal{E}}
\newcommand{\calH}{\mathcal{H}}
\newcommand{\HH}{\mathbf{H}}
\renewcommand{\SS}{\mathbf{S}}
\newcommand{\N}{\mathbf{N}}
\newcommand{\nn}{\mathbf{n}}
\newcommand{\dd}{\mathbf{d}}
\newcommand{\WW}{\mathbf{W}}
\newcommand{\agline}[2]{\overline{#1 \, #2}}
\newcommand{\twoviewhomography}{\vcenter{\hbox{\includegraphics[scale = 0.06]{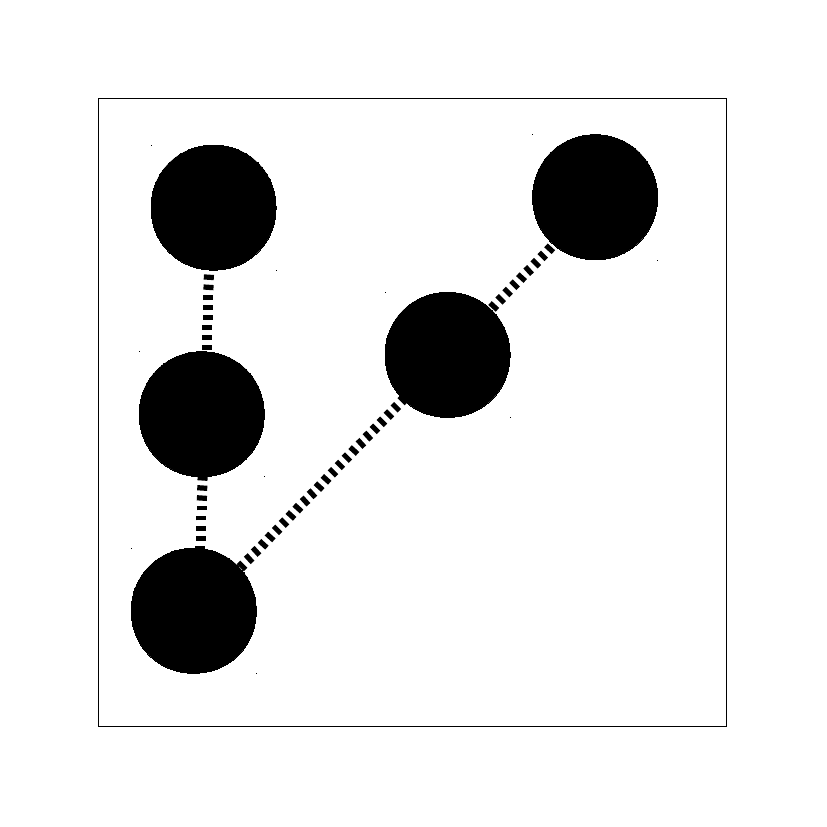}}} }
\newcommand{\threeviewhomography}{\vcenter{\hbox{\includegraphics[scale = 0.06]{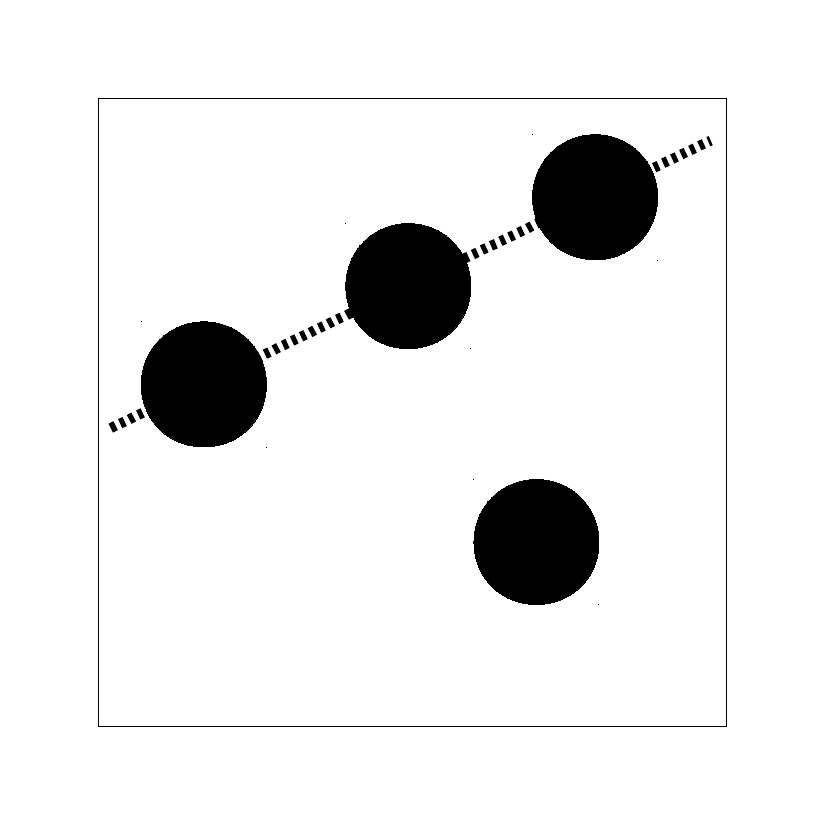}}} }
\newcommand{\fivepointproblem}{\vcenter{\hbox{\includegraphics[scale = 0.06]{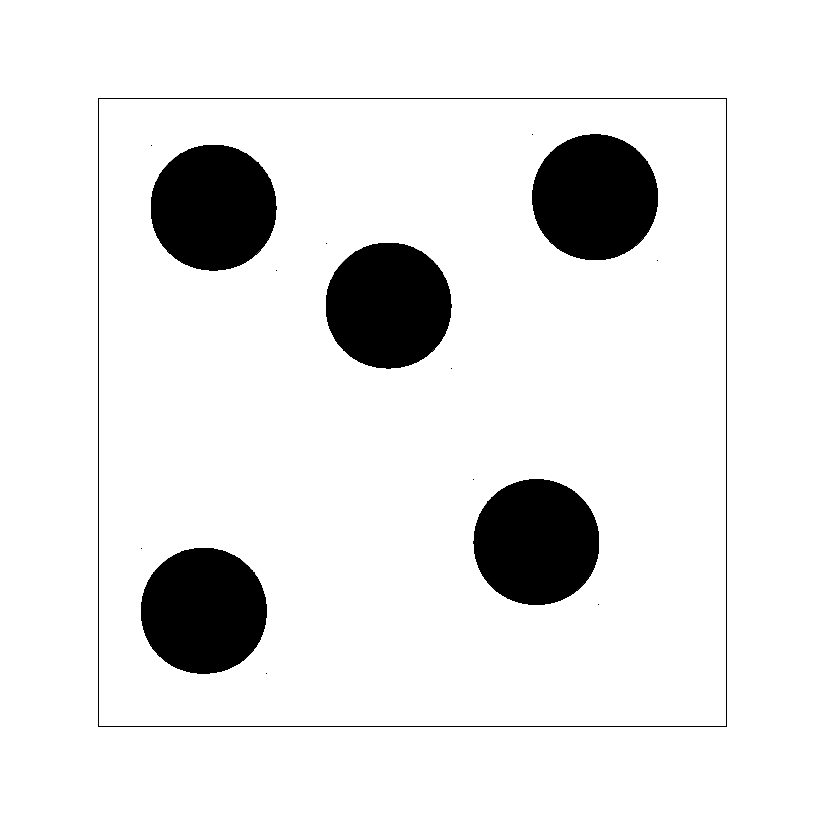}}} }
\newcommand{\chicago}{\vcenter{\hbox{\includegraphics[scale = 0.06]{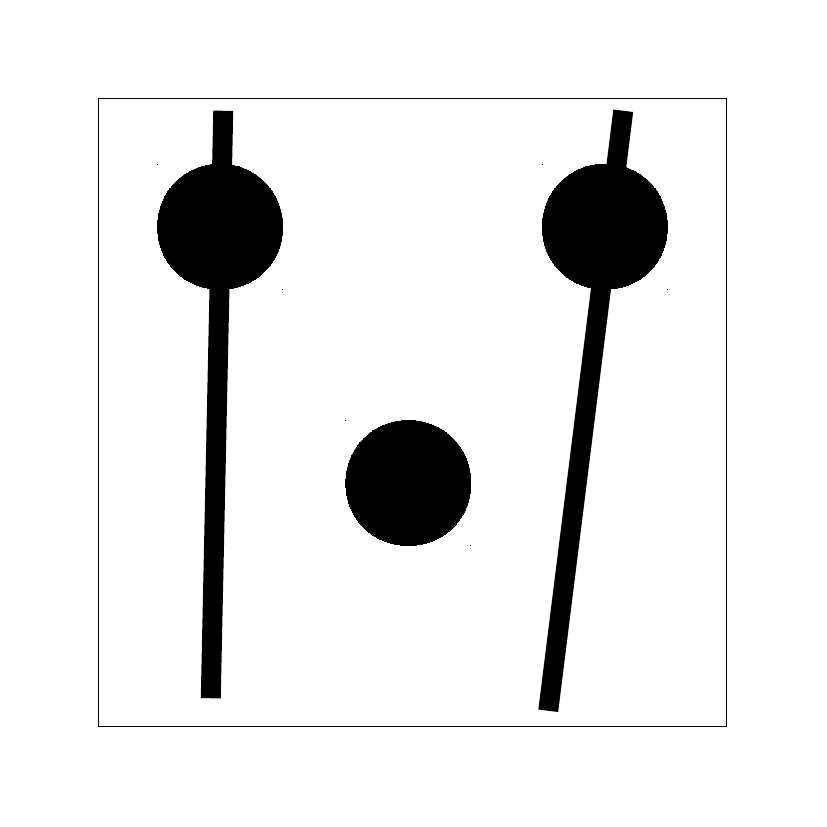}}} }
\newcommand{\cleveland}{\vcenter{\hbox{\includegraphics[scale = 0.06]{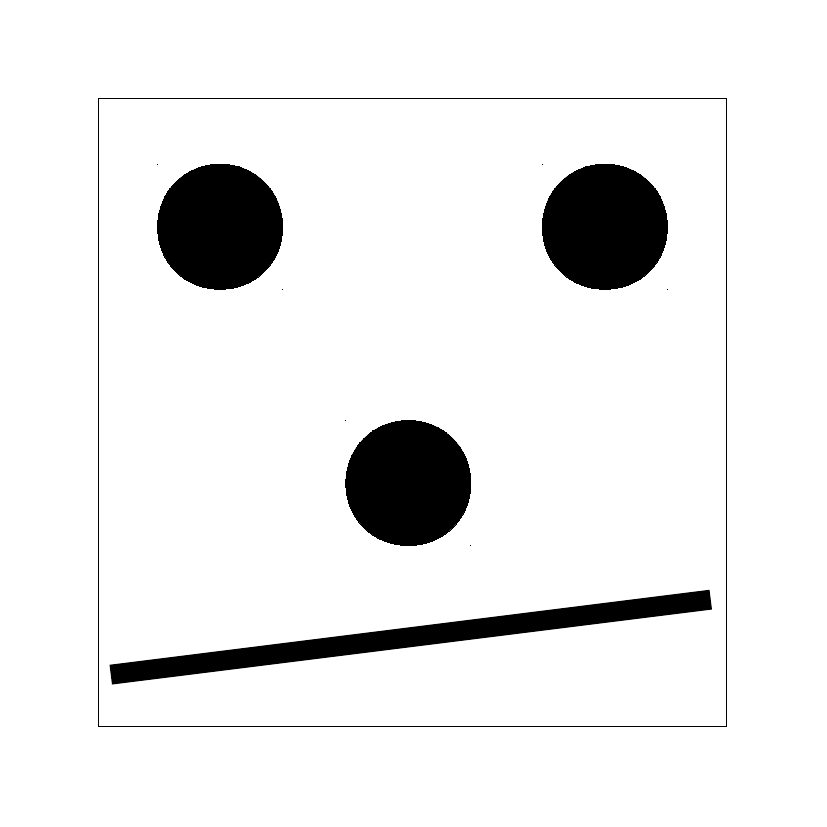}}} }
\newcommand{\partialminimal}{\vcenter{\hbox{\includegraphics[scale = 0.06]{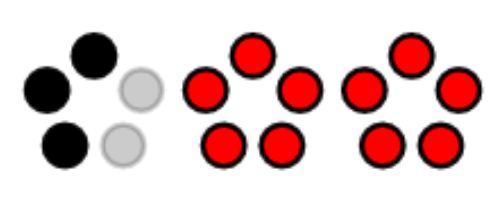}}} }
\newcommand{\fivepointcollinear}{\vcenter{\hbox{\includegraphics[scale = 0.06]{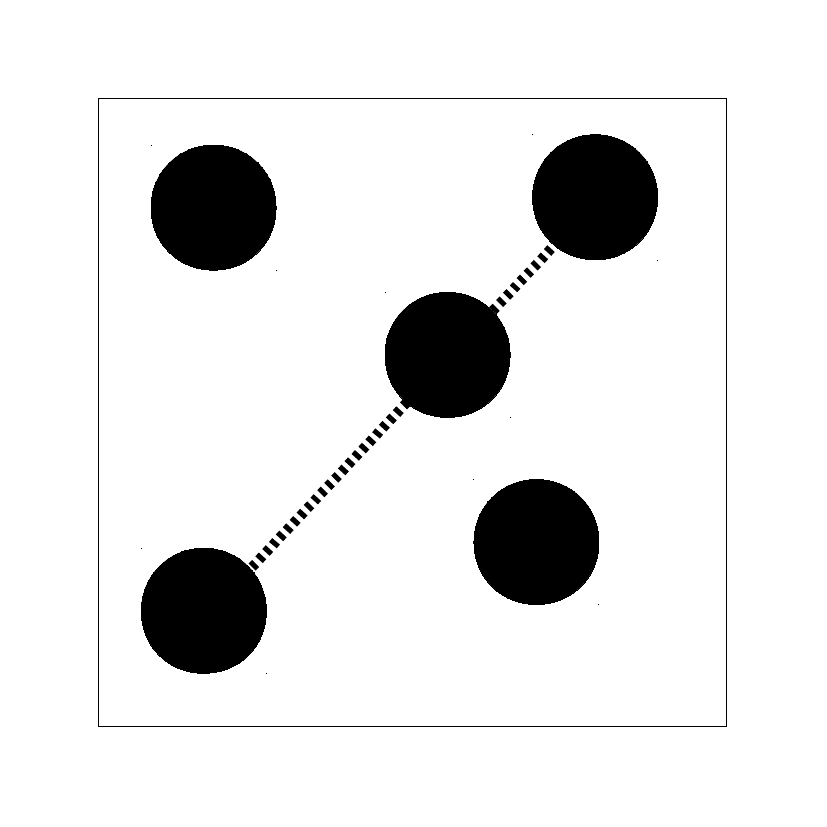}}} }
\newcommand{\hedgehog}{\vcenter{\hbox{\includegraphics[scale = 0.06]{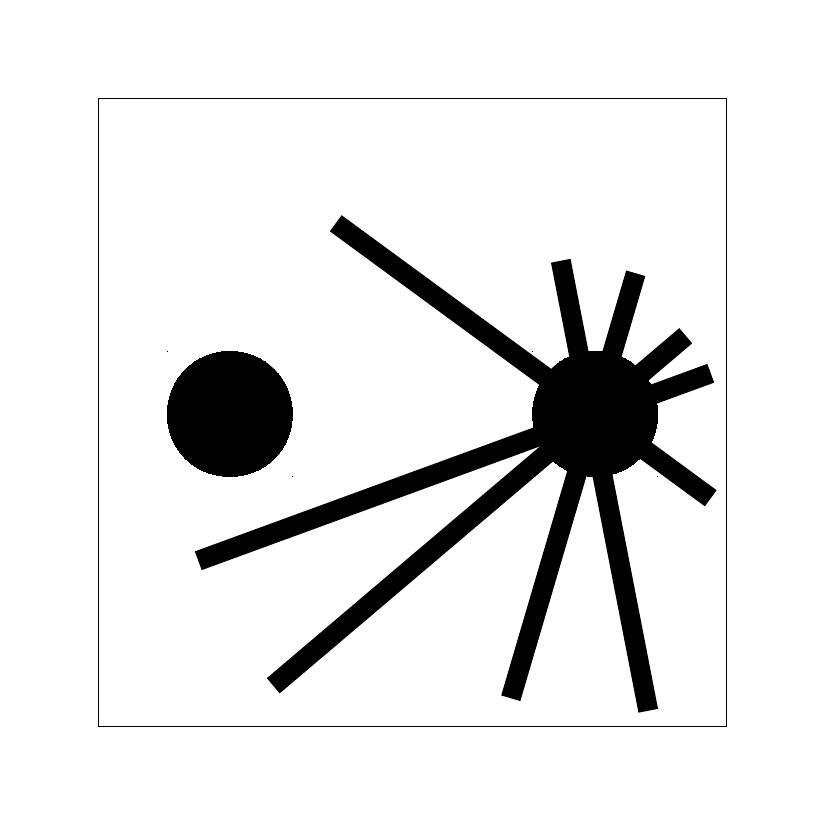}}} }
\newcommand{\fourcam}{\vcenter{\hbox{\includegraphics[scale = 0.06]{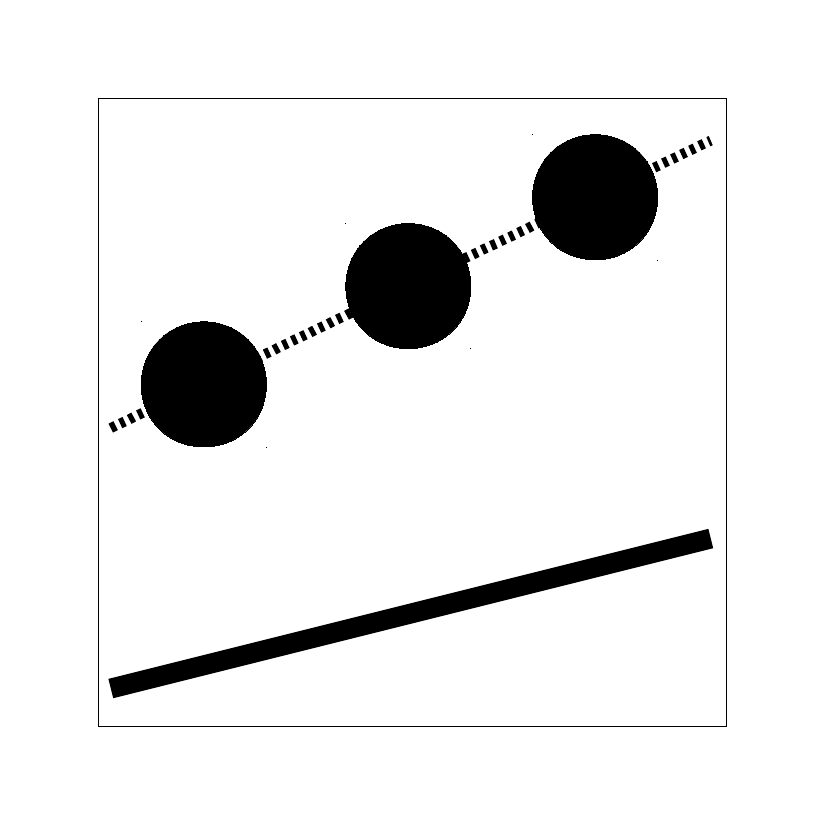}}} }
\title{
Galois/monodromy groups for decomposing\\
minimal problems in 3D reconstruction
}
\author{
Timothy Duff,\thanks{TD: Georgia Institute of Technology. tduff3@gatech.edu}\\
Viktor Korotynskiy,\thanks{Czech Institute of Informatics, Robotics and Cybernetics, Czech Technical University in Prague, CZ. Viktor.Korotynskiy@cvut.cz, pajdla@cvut.cz}\\
Tomas Pajdla\footnotemark[2]
\\
Margaret H. Regan\thanks{MHR: Duke University. mregan@math.duke.edu}
}
\date{\today}
\begin{document}

\maketitle

\begin{abstract}
\noindent
We consider Galois/monodromy groups arising in computer vision applications, with a view towards building more efficient polynomial solvers.
The Galois/monodromy group allows us to decide when a given problem decomposes into algebraic subproblems, and whether or not it has any symmetries.
Tools from numerical algebraic geometry and computational group theory allow us to apply this framework to classical and novel reconstruction problems.
We consider three classical cases---3-point absolute pose, 5-point relative pose, and 4-point homography estimation for calibrated cameras---where the decomposition and symmetries may be naturally understood in terms of the Galois/monodromy group.
We then show how our framework can be applied to novel problems from absolute and relative pose estimation.
For instance, we discover new symmetries for absolute pose problems involving mixtures of point and line features.
We also describe a problem of estimating a pair of calibrated homographies between three images.
For this problem of degree $64$, we can reduce the degree to $16,$ the latter better reflecting the intrinsic difficulty of algebraically solving the problem. 
As a byproduct, we obtain new constraints on compatible homographies, which may be of independent interest. 
\end{abstract}

\noindent {\bf Keywords.} Algebraic vision, 3D reconstruction, minimal problems, Galois groups, monodromy groups, numerical algebraic geometry

\noindent {\bf AMS Subject Classification: 12F10, 14Q15, 65H14, 68T45} 

\section{Introduction}

Estimating 3D geometry from data~\cite{HZ-2003,schoenberger2016sfm,snavely2008modeling} in one or more images is a reoccuring problem in subjects like computer vision, robotics, and photogrammetry.
The exact formulations of these problems vary considerably, depending on factors like the model of image formation that is being considered or what data are available.
A common element in many of these geometric pose estimation problems is the presence of polynomial constraints, depending on both the data and the unknown quantities to be estimated.
Considerable emphasis has been placed on \deff{minimal problems} which are usually well-posed in the sense that exact solutions exist for generic data.
A thorough understanding of these problems is not only theoretically appealing, but has practical consequences.
Minimal problems and their solvers~\cite{AgarwalLST17,Barath-CVPR-2018,Barath-TIP-2018,Barath-CVPR-2017,Byrod-ECCV-2008,DBLP:conf/eccv/CamposecoSP16,PLMP,PL1P,Elqursh-CVPR-2011,Ricardo,Hartley-PAMI-2012,Kileel-MPCTV-2016,DBLP:conf/eccv/KneipSP12,Kuang-ICCV-2013,kuang-astrom-2espc2-13,kukelova2008automatic,larsson2017efficient,Larsson-Saturated-ICCV-2017,larsson2017making,Miraldo-ECCV-2018,mirzaei2011optimal,Nister,DBLP:conf/cvpr/RamalingamS08,SalaunMM-ECCV-2016,saurer2015minimal,Stewenius-ISPRS-2006,ventura2015efficient} play an outsized role in RANSAC-based estimation, introduced to the computer vision community by Fischler and Bolles in 1981~\cite{RANSAC} and later developed to a very efficient and robust estimation method~\cite{Raguram-USAC-PAMI-2013}.
Using a robust minimal solver as a subroutine within the RANSAC loop allows for fewer iterations when compared to other techniques (eg.~linear algorithms~\cite{8pt}) that require more measurements and often ignore the underlying polynomial constraints.
However, the algebraic complexity of a minimal problem as measured by its \deff{degree} has been observed as a limiting factor for minimal solvers, despite their many successes in practice. 

Towards controlling this algebraic complexity, the question of how to detect and exploit symmetries when solving polynomial systems of equations has attracted interest in recent literature in computer vision~\cite{Ask12,LarssonSymmetries}.
The techniques developed there can be understood in the context of linear representation theory, and can be traced back to pioneering work in computer algebra~\cite{Corless,gatermann90}.
As noted in~\cite[Sec 4]{LarssonSymmetries}, these techniques may be difficult to apply without some problem-specific knowledge.
Moreover, many problems have \emph{nonlinear} symmetries.
A famous example is the problem of 5-point relative pose estimation, or simply the 5-point problem. 
The set of solutions to this problem is invariant under a nonlinear symmetry known as the twisted pair.
\begin{example}
\label{ex:5pp}
Two cameras view some number of points in the world (ie.~3-dimensional space.)
Each camera is modeled via perspective projection.
When the cameras are calibrated~\cite[Ch.~6]{HZ-2003}, we may assume that the two camera frames differ by a rotation $\R$ and a translation $\tran.$
\begin{figure}[H]
\begin{center}
\def\svgwidth{0.8\columnwidth}
\begingroup%
  \makeatletter%
  \providecommand\color[2][]{%
    \errmessage{(Inkscape) Color is used for the text in Inkscape, but the package 'color.sty' is not loaded}%
    \renewcommand\color[2][]{}%
  }%
  \providecommand\transparent[1]{%
    \errmessage{(Inkscape) Transparency is used (non-zero) for the text in Inkscape, but the package 'transparent.sty' is not loaded}%
    \renewcommand\transparent[1]{}%
  }%
  \providecommand\rotatebox[2]{#2}%
  \newcommand*\fsize{\dimexpr\f@size pt\relax}%
  \newcommand*\lineheight[1]{\fontsize{\fsize}{#1\fsize}\selectfont}%
  \ifx\svgwidth\undefined%
    \setlength{\unitlength}{198.42519685bp}%
    \ifx\svgscale\undefined%
      \relax%
    \else%
      \setlength{\unitlength}{\unitlength * \real{\svgscale}}%
    \fi%
  \else%
    \setlength{\unitlength}{\svgwidth}%
  \fi%
  \global\let\svgwidth\undefined%
  \global\let\svgscale\undefined%
  \makeatother%
  \begin{picture}(1,0.64285714)%
    \lineheight{1}%
    \setlength\tabcolsep{0pt}%
    \put(0,0){\includegraphics[width=\unitlength,page=1]{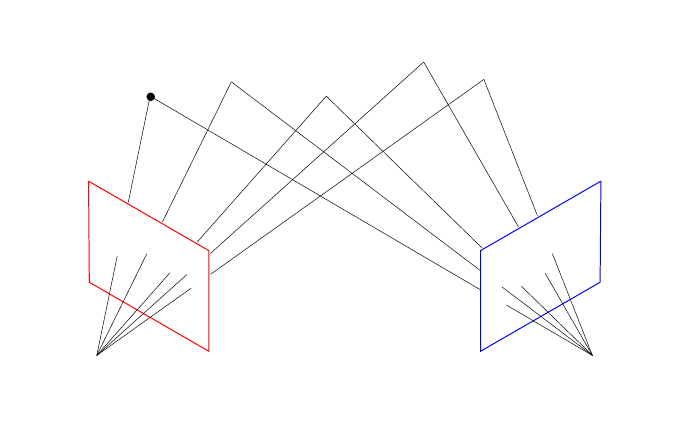}}%
    \put(0.0927371,0.1098441){\color[rgb]{0,0,0}\makebox(0,0)[lt]{\lineheight{1.25}\smash{\begin{tabular}[t]{l}$C_1$\end{tabular}}}}%
    \put(0.8778511,0.11074437){\color[rgb]{0,0,0}\makebox(0,0)[lt]{\lineheight{1.25}\smash{\begin{tabular}[t]{l}$C_2$\end{tabular}}}}%
    \put(0,0){\includegraphics[width=\unitlength,page=2]{5pt.pdf}}%
    \put(0.47719084,0.03421369){\color[rgb]{0,0,0}\makebox(0,0)[lt]{\lineheight{1.25}\smash{\begin{tabular}[t]{l}$\mathbf{R}, \mathbf{t}$\end{tabular}}}}%
    \put(0,0){\includegraphics[width=\unitlength,page=3]{5pt.pdf}}%
  \end{picture}%
\endgroup%

\label{fig:5pt}
\end{center}
\end{figure}
In the five-point problem, we are given the $2$D data of $5$ correspondences between image points $\xx_1 \leftrightarrow \yy_1, \ldots , \xx_5 \leftrightarrow \yy_5$ which are assumed to be images of $5$ world points.
The image points are given in \deff{normalized image coordinates}, meaning that each is a $3\times 1$ vector whose last coordinate equals $1.$
The task is to reconstruct the relative orientation $\cam{\R}{\tran} \in \SERR (3) $ between the two views and each of the five world points as measured by their depths with respect to the first and second camera frames. Writing $\alpha_1 , \ldots , \alpha_5$ for the depths with respect to the first camera and $\beta_1,\ldots , \beta_5$ for the depths with respect to the second camera, the five-point problem becomes a system of polynomial equations and inequations:
\begin{equation} \label{eq:5p_intro}
    \begin{split}
        \R^\top\R = \mathtt{I}, \quad \det \R = 1,
        \\
        \beta_i\yy_i = \R\alpha_i\xx_i + \mathbf{t}, \;\; \alpha_i, \beta_i \neq 0, \quad \forall \,  i = 1,\dots,5.
    \end{split}
\end{equation}
An inherent ambiguity of the five-point problem is that the unknowns $\mathbf{t}, \alpha_1, \ldots , \alpha_5, \beta_1, \ldots , \beta_5$ can only be recovered up to a common scale factor.
If we treat these unknowns as homogeneous coordinates on a $12$-dimensional projective space, then for generic data $\xx_1,\ldots , \xx_5, \yy_1, \ldots , \yy_5,$ there are at most finitely many solutions in $\R, \mathbf{t}, \alpha_1, \ldots, \alpha_5, \beta_1,\ldots , \beta_5$ to the system~\eqref{eq:5p_intro}.
Moreover, if we count solutions over the complex numbers, there are exactly $20$ solutions for generic data in $Z = \left( \CC^2 \times \{1 \}\right)^5 \times \left( \CC^2 \times \{ 1 \} \right)^5.$
The solutions to~\eqref{eq:5p_intro} are naturally identified with the fibers of a branched cover $f:X \to Z$ (see Definition~\ref{def:branchedcover}), 
where $X$ is the \deff{incidence correspondence}
\[
X = \{ \left(\R, (\mathbf{t}, \alpha_1, \ldots , \alpha_5, \beta_1, \ldots , \beta_5), (\xx_1,\ldots , \xx_5, \yy_1 , \ldots , \yy_5) \right) \in \SOCC (3) \times \PP_\CC^{12} \times Z \mid 
\eqref{eq:5p_intro} \text{ holds }
\} .
\]
For most solutions to~\eqref{eq:5p_intro}, the associated \deff{twisted pair} solution is obtained by rotation of the second camera frame $180^\circ$ about the baseline connecting the first and second camera centers.
\begin{figure}[H]
\begin{center}
\def\svgwidth{\columnwidth}
\import{./Figs/5pt/}{twisted_pair.pdf_tex}
\label{fig:5pt_twisted_pair}
\end{center}
\end{figure}
The twisted pair may be viewed as a rational map $\rat{\Psi}{X}{X},$ given coordinate-wise by
\begin{equation}
\label{eq:twisted-pair}
\begin{split}
\Psi (\R) &= 
\left(
2 \displaystyle\frac{\tran \hfill \tran^\top}{\tran^\top \hfill \tran} \, 
- \I
\right)
\, \R 
\\
\Psi (\tran) &= \tran \\
\Psi (\alpha_i ) &= \displaystyle\frac{-\alpha_i \norm{\tran}^2 }{\norm{\tran}^2 + 2 \, \langle \R^\top \tran , \alpha_i \xx_i \rangle 
}
= \displaystyle\frac{-\alpha_i\norm{\tran}^2}{\norm{\beta_i\yy_i}^2 - \norm{\alpha_i\xx_i}^2}
\\
\Psi (\beta_i ) 
&= 
\displaystyle\frac{\beta_i \norm{\tran}^2 }{\norm{\tran}^2 + 2 \, \langle \R^\top \tran , \beta_i \xx_i \rangle 
}
= \displaystyle\frac{\beta_i\norm{\tran}^2}{\norm{\beta_i\yy_i}^2 - \norm{\alpha_i\xx_i}^2}\\
\Psi (\xx_1, \ldots, \xx_5, \yy_1, \ldots , \yy_5) &= (\xx_1, \ldots, \xx_5, \yy_1, \ldots , \yy_5) .
\end{split}    
\end{equation}
Here, we use the notation $\langle ,  \rangle$ and $\norm{\cdot }^2$ for the complex quadratic forms $\langle \bs{a}, \bs{b} \rangle= a_1b_1 + a_2 b_2 + a_3 b_3,$ $\norm{\bs{a}}^2 = \langle \bs{a}, \bs{a} \rangle $ which restrict to the usual norm and inner product on $\RR^3.$
We note that $\Psi$ is undefined whenever $\bs{t}\in \PP^2$ is an \deff{isotropic vector} satisfying $\norm{\bs{t} }^2 =0,$ and whenever $\norm{\alpha_i\xx_i}^2 = \norm{\beta_i\yy_i}^2$ for some $i = 1,\dots,5$. 
The second condition can be understood geometrically: if the camera centers and the world point $\mathbf{X} = \alpha \xx$ form an isosceles triangle with base $\norm{\tran}$, then, after rotating the second camera, the rays which join camera centers to the respective image points will become parallel.

One can check (e.g.~\cite[p.~20]{Maybank}) that the set of solutions to equations~\eqref{eq:5p_intro} is left invariant under application of $\Psi.$
In other words, we have an equality of mappings $f \circ \Psi = f$ wherever $\Psi$ is defined. 
The map $\Psi $ is a deck transformation
(see Definition~\ref{def:deck}) of the branched cover $f.$
\end{example}
The twisted pair of Example~\eqref{eq:5p_intro} is a well-known construction.
It is not easy to determine that such a symmetry exists just from staring at Equations~\eqref{eq:5p_intro}.
However, the existence of such a symmetry can be easily decided after computing the Galois/monodromy group of $f,$ as we make explicit in Proposition~\ref{prop:deck-centralizer}.
The state-of-the-art method~\cite{Nister} for solving the five-point problem is based on \deff{decomposing} the branched cover $f$ in terms of the \deff{essential matrix} (see Examples~\ref{ex:resolve-twisted}, \ref{ex:deck-resolve-twisted} and Section~\ref{subsec:5pp}).
In general, decomposability can be detected after computing the Galois/monodromy group, by Proposition~\ref{prop:decomposable_iff_imprimitive}.

Working within the framework of branched covers and Galois/monodromy groups allows us to probe the structure of minimal problems in ways that the previously mentioned works cannot.
As is well-known (see Proposition~\ref{prop:galoismonodromy}), this group can be understood both algebraically (``Galois'') and topologically (``monodromy'').
The algebraic point of view was pursued by Hartley, Nist\'{e}r, and Stew\'{e}nius~\cite{NisterHartley}, who computed Galois groups symbolically to show that certain formulations of minimal problems were degree-optimal.
In our paper, the topological point of view is somewhat more relevant, since we can compute the monodromy action for problems of potentially large degree using numerical homotopy continuation methods, as  implemented in several software packages~\cite{Bertini,HCJL,MonodromySolver}.

Computing the Galois/monodromy group allows us to decide decomposability or the existence of deck transformations by reduction to standard algorithms in computational group theory~\cite{holt}. These algorithms are implemented in computer algebra systems such as GAP~\cite{GAP}, which was an essential ingredient in discovering our main results.
We find frequently that many minimal problems do not decompose---in this case, the Galois/monodromy group tells us that our formulation is optimal in a precise sense.
However, we find in several cases that minimal problems previously considered in the literature do decompose.
We describe this in the context of problems where this phenomenon is already well-understood (such as the five-point problem), and also in several cases where some decomposition or symmetry was not previously noticed.

Many of our results are computational, and thus may fall short of the conventional standard for proof.
We use the term ``Result'' instead of ``Theorem'' in these cases.

\begin{remark}
\label{remark:symbolicGalois}
Our Galois groups are geometric, over the field $\CC,$ rather than arithmetic, over the field $\QQ .$
In general, the geometric Galois group is a normal subgroup of the arithmetic Galois group.
The two can be different---for instance, the arithmetic Galois group for Example~\ref{ex:x^3} is $S_3.$
However, we expect these two groups to be the same for most problems considered here.
Several problems (such as the absolute pose problems appearing in Section~\ref{sec:abspose}) are well-within the range of symbolic Galois group computation.
For problems of higher degree appearing in Section~\ref{sec:rel-pose},
an alternative route for recognizing the frequently occuring symmetric groups would use heuristics based on the Chebotarev density theorem as in~\cite{del2019classification} after computing certain lexicographic Gr\"{o}bner bases.
We do not pursue this route.
\end{remark}

For readers with a background in computer vision, we suggest starting with the familiar examples in Sections~\ref{sec:abspose} and~\ref{sec:rel-pose}, referring back to the theory and examples in Section~\ref{sec:background} as needed.
The mathematics needed to fully understand our paper is presented in Section~\ref{subsec:branch}.
We include proofs of several standard facts in hope that our paper is accessible to a broad audience.
We give a brief overview of numerical methods for computing Galois/monodromy groups in Section~\ref{subsec:num_methods}. 
In Section~\ref{sec:abspose}, we investigate the Galois/monodromy groups of absolute pose problems involving points and lines, where the task is to compute cameras from correspondence data between the world and an image.
In particular, we describe newly discovered symmetries for problems involving a mixture of points and lines.
Section~\ref{sec:rel-pose} considers relative pose problems, where the task is to compute the transformation between camera frames given correspondence data between images.
We describe decompositions for the five-point problem in Section~\ref{subsec:5pp}, the four-point calibrated homography problem for two views in Section~\ref{subsec:2viewhomography}, and a novel minimal problem involving two calibrated homographies between three images in Section~\ref{subsec:3viewhomography}. 
We give a conclusion and outlook in Section~\ref{sec:outlook}.

\section{Background}
\label{sec:background}

\subsection{Branched covers and monodromy groups}
\label{subsec:branch}
\begin{defn}
\label{def:branchedcover}
A \deff{branched cover} is a dominant, rational map $\ratnoname{X}{Z},$ where $X$ and $Z$ are irreducible algebraic varieties over $\CC$ of the same dimension.
\end{defn}
We emphasize several consequences of this definition.
Most importantly, it follows that for generic (and hence \emph{almost all}) data $z\in Z,$ the fiber over $z,$ denoted $X_z ,$ is a nonempty, finite set.
Second is the assumption of irreducibility, which implies that the monodromy group acts transitively.
In principal, we can always reduce to the case of an irreducible variety by considering the irreducible components of an arbitrary variety.
The many examples we consider show that irreducibility is a natural assumption.
Finally, although many of the branched covers we consider are actually regular maps, the maps appearing as deck transformations or in a factorization need not be defined on the same domain as the branched cover, making it more natural to work with rational maps.
We say that $X$ and $Z$ are the \deff{total space} and \deff{base} of the branched cover, respectively.
The reader may safely assume that all varieties are quasiprojective. 

Pulling back rational functions from $Z$ to $X$ lets us identify $\CC (Z)$ with a subfield of $\CC (X)$.
Since $\CC (X)$ and $\CC (Z)$ have the same transcendence degree over $\CC ,$ the field extension $\CC(X) / \CC(Z)$ is finite.
The \deff{degree} of the map $\ratnoname{X}{Z}$ may be defined as the degree of this field extension.
We write $\deg (X/Z)$ for this quantity, since the map $\ratnoname{X}{Z}$ is usually clear from context.
We say that a nonempty Zariski-open $U\subset Z$ is a \deff{regular locus} for $\ratnoname{X}{Z}$ if $U \cap Z_{\sing } = \emptyset$ and if for all $z\in U$ the cardinality of the fiber $X_z := f^{-1}(z)$ is equal to the degree of the map.
The existence of such a $U$ follows from basic results in algebraic geometry~\cite[cf.~pp.~142]{Shaf}.

We now recall the monodromy action on the fibers of of a degree-$d$ branched cover $\rat{f}{X}{Z}.$
We omit many details which may be found in several excellent introductory references~\cite{Hatcher,Mir95,Zol06}.
Fix a regular locus $U$ and a basepoint $z\in U,$ and write  $X_z = \{x_1,\dots,x_d\}.$ 
A \deff{loop} based at $z$ is a continuous map $\gamma : [0,1] \to U$ which satisfies $\gamma (0) = \gamma (1) = z.$
For each $x_i,$ there exists a unique \deff{lift}  $\widetilde{\gamma}_i : [0, 1] \to f^{-1} (U)$ satisfying $\gamma = f\circ \widetilde{\gamma}_i $ and $\gamma (0) = x_i.$ 
This fact from topology is known as the \deff{unique path-lifting property} (see eg.~\cite[Proposition 1.30]{Hatcher}).
The lifts based at each of the points $x_1,\ldots , x_d$ determine a permutation of the fiber, $\sigma_{\gamma}: X_z \to X_z$, which may be written in two-line notation as
\begin{equation}
\label{eq:permutation-lift}
\sigma_{\gamma } = 
\left(\begin{array}{ccccc}
x_1 & x_2 & x_3 & \cdots & x_d \\
\widetilde{\gamma}_1(1) & \widetilde{\gamma}_2(1) & \widetilde{\gamma}_3(1) & \cdots & \widetilde{\gamma}_d(1) 
\end{array}\right). 
\end{equation}

\begin{remark}
\label{remark:groupTheory}
We use standard notation from group theory: $\sym (X)$ is the symmetric group of all permutations from a finite set $X$ to itself, $S_n, \, A_n, $ denote symmetric and alternating groups acting on letters $[n] = \{ 1, \ldots , n \}$ (hence $\sym ([n]) = S_n$), and $C_n$ denotes a cyclic group of order $n.$ 
At several points, we must distinguish between abstract groups and the way they act on sets.
For instance, the usual action of $S_4$ on $[4]$ is not equivalent to the action of $S_4 \hookrightarrow S_6$ on the $6 = \binom{4}{2}$ unordered pairs in $[4]$.
The latter group may also described as $S_2 \wr S_3 \cap A_6.$ 
Here, $\wr$ denotes the \deff{wreath product}; the group $S_2 \wr S_3 $ may be realized as the subgroup of $S_6$ preserving the partition $[6] = [2] \cup \{3,4\} \cup \{5,6\}.$
In general, the wreath product $S_m \wr S_n$ is a semidirect product $\left(S_m\right)^n \rtimes S_n,$ and is usually equipped with an action on the Cartesian product $[n] \times [m].$
We refer to~\cite[Chapter 7]{Rotman} for several useful facts about the wreath product.
The group $S_2 \wr S_3 \cap A_6$ will reappear in Section~\ref{subsec:2viewhomography} on homography estimation.
\end{remark}

The permutation $\sigma_\gamma $ is independent of the homotopy class of $\gamma $ in $U,$ from which one obtains a homomorphism from the fundamental group $\pi_1 (U , z) $ into the symmetric group $\sym ( X_z)$:
\begin{equation}
  \label{eq:monodromyRep}
  \begin{split}
  \rho : \pi_1 (U , z) &\to \sym (X_z)\\
       [\gamma ] &\mapsto \sigma_{\gamma }.
       \end{split}
\end{equation}
The map $\rho $ in Equation~\ref{eq:monodromyRep} is known as the \deff{monodromy representation.}
The image of this homomorphism is a permutation group which acts transitively on the fiber $X_z.$
It is called the \deff{monodromy group,} and will be denoted by $\mon (X/Z; U, z),$ or $\mon (X/Z;z),$ or simply $\mon (X/Z).$
The latter notation, and our preferred terminology of \deff{Galois/monodromy group}, is justified by Proposition~\ref{prop:galoismonodromy}.
We write $\galclo{\CC (X)} / \CC (Z)$ for the Galois closure of $\CC(X) / \CC(Z)$ and $\Gal (X / Z)$ for its Galois group.
\begin{prop}
\label{prop:galoismonodromy}
Let $\ratnoname{X}{Z}$ be a branched cover with regular locus $U,$ and fix a basepoint $z\in U.$
Then the $\Gal (X/Z)$ and $\mon (X/Z ; U, z)$ are isomorphic as permutation groups.
In particular, $\mon (X/Z; U, z)$ is independent of the choice of $(U,z).$
\end{prop}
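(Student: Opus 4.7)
The plan is to move between the algebraic and topological descriptions of the cover via the Galois correspondence for connected covering spaces, combined with Riemann's existence theorem (or GAGA for finite \'etale covers) to algebraize the relevant topological covering.

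First I would restrict $f$ to $V = f^{-1}(U)$, obtaining a finite topological covering map $f|_V : V \to U$ of degree $d$ between smooth complex varieties in the analytic topology. On the topological side, I form the Galois closure of this cover as follows: since the action of $\pi_1(U,z)$ on $X_z$ is transitive, the intersection of the point stabilizers of $x_1,\ldots,x_d$ equals $N := \ker \rho$. The connected covering $\widetilde{U} \to U$ corresponding to $N$ is then normal, with deck transformation group $\pi_1(U,z)/N \cong \mon(X/Z; U, z)$ acting on the fiber over $z$ by exactly the permutations given by the monodromy representation. Concretely, $\widetilde{U}$ may be realized as the connected component through $(x_1,\ldots,x_d)$ of the $d$-fold fiber product $V \times_U \cdots \times_U V$ with the big diagonals removed.

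Next, Riemann's existence theorem guarantees that the analytic covering $\widetilde{U} \to U$ is algebraizable: there is a finite surjective morphism of algebraic varieties $\widetilde{X} \to Z$, unramified over $U$, whose analytification restricts to $\widetilde{U} \to U$. The function field $\CC(\widetilde{X})$ is a finite Galois extension of $\CC(Z)$ containing $\CC(X)$ (because $\widetilde{U}$ dominates $V$), and the fact that $\widetilde{U}$ is the smallest normal cover of $U$ dominating $V$ translates into $\CC(\widetilde{X}) = \galclo{\CC(X)}$, so that $\Gal(X/Z) = \Aut(\CC(\widetilde{X})/\CC(Z))$. To match the permutation actions on $X_z$, I note that $\Gal(X/Z)$ acts on the set of $\CC(Z)$-embeddings $\CC(X) \hookrightarrow \galclo{\CC(X)}$; under the identification from Riemann existence, such embeddings correspond to the sheets of $V \to U$ meeting a chosen sheet of $\widetilde{U} \to U$ over $z$, i.e., to the points of $X_z$. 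The deck action of $\widetilde{U} \to U$ permutes these sheets by the same rule, and this deck action coincides with the monodromy action by construction, yielding an isomorphism $\Gal(X/Z) \cong \mon(X/Z; U, z)$ of permutation groups. Independence of $(U,z)$ follows immediately, since $\Gal(X/Z)$ was defined purely in terms of the function-field extension.

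The hard part will be invoking Riemann existence / GAGA cleanly in the quasiprojective setting and then carefully checking sheet-by-sheet that the algebraic Galois action on $\CC(Z)$-embeddings matches the topological monodromy action on $X_z$; once these two identifications are in place, the remainder is unwinding definitions and applying the standard covering-space Galois correspondence between subgroups of $\pi_1(U,z)$ and connected covers of $U$.
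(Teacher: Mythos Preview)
Your argument is correct and complete in outline; the fiber-product construction of the Galois closure together with Riemann existence (or, equivalently, the algebraizability of finite analytic covers of quasiprojective varieties) is a standard and clean way to establish the result. Note, however, that the paper does not supply its own proof: it simply cites Harris and summarizes his argument in two sentences. Harris's approach is somewhat different from yours. Rather than building $\widetilde{U}$ as a component of the complement of the diagonals in $V\times_U\cdots\times_U V$ and then invoking Riemann existence to algebraize it, Harris works directly with germs of algebraic functions: he realizes $\galclo{\CC(X)}$ as the field generated over $\CC(Z)$ by the germs at $z$ of local inverse branches of $f$, and then shows by analytic continuation that the Galois action on these germs coincides with the monodromy action on $X_z$. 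Your route has the advantage of being more structural and of making the independence of $(U,z)$ immediate from the function-field description; Harris's route avoids the machinery of Riemann existence and stays closer to the classical picture of multivalued functions, which some readers may find more concrete. Either argument is acceptable here, and your identification of the ``hard part''---matching the Galois action on $\CC(Z)$-embeddings with the deck/monodromy action sheet by sheet---is exactly the point that requires care in both approaches.
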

A proof of Proposition~\ref{prop:galoismonodromy} is given in~\cite{Harris}.
In this proof, the Galois closure $\galclo{\CC (X)} / \CC (Z)$ is identified as an extension of $\CC (X)$ obtained by adjoining certain germs of functions around points in $X_z.$ 
With this identification, it is then argued (using the Galois correspondence and analytic continuation) that the Galois and monodromy actions on $X_z$ coincide.
\begin{example}
\label{ex:x^3}
Consider
\[
X = \{ (x,\, z)  \in \CC \times \CC \mid x^3 = z \}\]
as a degree-$3$ branched cover over $Z=\CC $ given by $(x,\, z) \mapsto z.$
A regular locus is the punctured complex line $U=\{ z \mid z\ne 0 \}.$
The monodromy group $\mon (X/Z) \cong A_3 = C_3$ acts by cyclic permutation of $X_z=\{ z, \omega \, z , \omega^2 \, z \},$ where $\omega = \exp (2 \pi i / 3).$
Indeed, $\pi_1 (U ;z)$ is generated by the loop $\gamma (t) = e^{2 \pi i t},$ which encircles the branch point $z=0$ and induces the permutation $\sigma_\gamma$ defined by 
\[
\sigma_{\gamma } = 
\left(\begin{array}{ccccc}
z & \omega \, z & \omega^2 \, z   \\
\omega \, z   & \omega^2 \, z & z 
\end{array}\right). 
\]
\end{example}
\begin{defn}
\label{def:bir}
Two branched covers, $\ratnoname{X_1}{Z_1}$ and $\ratnoname{X_2}{Z_2}$, are \deff{birationally equivalent} if there exist birational maps $ \ratnoname{X_1}{X_2}$ and $\ratnoname{Z_1}{Z_2}$ such that the following diagram commutes:
\begin{center}
\begin{tikzcd}
X_1 \arrow[d, dashed] \arrow[r, dashed] & X_2\arrow[d,dashed] \\ Z_1 \arrow[r, dashed ] & Z_2 .
\end{tikzcd}
\end{center}
\end{defn}
\begin{prop}
\label{prop:bir}
The Galois/monodromy group of a branched cover is a birational invariant.
\end{prop}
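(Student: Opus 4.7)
The plan is to reduce the statement to a purely algebraic one about function fields via Proposition~\ref{prop:galoismonodromy}, and then invoke the functoriality of the Galois closure. Given birationally equivalent branched covers $\rat{f_1}{X_1}{Z_1}$ and $\rat{f_2}{X_2}{Z_2}$, with birational maps $\rat{\psi}{X_1}{X_2}$ and $\rat{\phi}{Z_1}{Z_2}$ making the diagram in Definition~\ref{def:bir} commute, the pullbacks yield field isomorphisms $\psi^\ast : \CC(X_2) \to \CC(X_1)$ and $\phi^\ast : \CC(Z_2) \to \CC(Z_1)$. The commutativity $f_2 \circ \psi = \phi \circ f_1$ translates, after identifying $\CC(Z_i)$ with its image in $\CC(X_i)$ under $f_i^\ast$, into the equality $\psi^\ast \big|_{\CC(Z_2)} = \phi^\ast$. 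Thus $\psi^\ast$ is an isomorphism of field extensions $\CC(X_2)/\CC(Z_2) \xrightarrow{\sim} \CC(X_1)/\CC(Z_1)$.

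Next, I would push this isomorphism to the Galois closures. Since the Galois closure of a finite separable extension $L/K$ is characterized (up to unique $K$-isomorphism) as a minimal normal extension of $K$ containing $L$, the isomorphism $\psi^\ast$ extends to an isomorphism of Galois closures $\galclo{\CC(X_2)} \xrightarrow{\sim} \galclo{\CC(X_1)}$ that is compatible with $\phi^\ast$ on the bottom field. Conjugation by this extended isomorphism then gives a group isomorphism $\Gal(X_1/Z_1) \xrightarrow{\sim} \Gal(X_2/Z_2)$. To upgrade this to an equivalence of \emph{permutation} groups, recall that the action of $\Gal(X_i/Z_i)$ on the fiber $(X_i)_{z_i}$ is, via Proposition~\ref{prop:galoismonodromy}, equivalent to its action on the set of $\CC(Z_i)$-embeddings $\CC(X_i) \hookrightarrow \galclo{\CC(X_i)}$; the compatible isomorphisms set up a canonical bijection between these two sets of embeddings that intertwines the two Galois actions, giving the required equivalence of permutation groups.

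The main obstacle is the last step: not merely exhibiting an abstract group isomorphism, but verifying that it respects the permutation structure on fibers. The cleanest way to handle this is the function-theoretic route above, but one can alternatively argue topologically: shrink the regular loci $U_i \subset Z_i$ to Zariski-open $V_i$ on which $\phi$ is a biholomorphism $V_1 \xrightarrow{\sim} V_2$ and $\psi$ is a biholomorphism $f_1^{-1}(V_1) \xrightarrow{\sim} f_2^{-1}(V_2)$; fix $z \in V_1$, and observe that $\phi_\ast : \pi_1(V_1, z) \to \pi_1(V_2, \phi(z))$ is an isomorphism, while $\psi$ restricts to a bijection of fibers $(X_1)_z \to (X_2)_{\phi(z)}$ that intertwines the two monodromy representations of~\eqref{eq:monodromyRep}. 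One must then check that passing from $U_i$ to the smaller regular locus $V_i$ does not shrink the monodromy group, which follows from surjectivity of $\pi_1(V_i) \to \pi_1(U_i)$ for Zariski-open inclusions of smooth varieties, or simply from the independence statement already contained in Proposition~\ref{prop:galoismonodromy}.
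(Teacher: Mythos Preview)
Your proposal is correct and follows essentially the same approach as the paper: the paper's proof also first observes that a birational equivalence induces an isomorphism of field extensions (invoking Proposition~\ref{prop:galoismonodromy}), and then sketches the alternative topological argument via identifying lifts through the birational map on suitable regular loci. If anything, you are more careful than the paper in spelling out why the resulting isomorphism is one of \emph{permutation} groups rather than just abstract groups.
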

\begin{proof}
This follows easily from Proposition~\ref{prop:galoismonodromy}, since a birational equivalence of branched covers induces an isomorphism of field extensions.
A more topological proof is also possible.
Let us write $\rat{\Psi}{X_1}{X_2},$ $\rat{\psi}{Z_1}{Z_2}$ for the maps appearing in Definition~\ref{def:bir}. 
Then, for suitable regular loci $U_1 \subset Z_1, U_2 \subset Z_2,$
there is an isomorphism $\mon (X_1 / Z ; U_1, z) \cong \mon (X_2 / Z ; U_2 , \Psi (z))$ which may be defined by identifying, for each $\gamma : [0,1] \to U_1$ based at $z,$  the lifts $\widetilde{\gamma}_1, \ldots , \widetilde{\gamma }_d$ in $X_1$ with the lifts $\Psi \circ \widetilde{\gamma}_1, \ldots , \Psi \circ \widetilde{\gamma }_d$ in $X_2.$
\end{proof}

\begin{example}
\label{ex:resolve-twisted}
Consider the regular branched cover $\SOCC (3) \times \PP^2 \to \mathcal{E}$ given by 
\begin{equation}
\label{eq:twistedPairMap}
(\R, \mathbf{t} ) \mapsto [\tran]_\times  \R ,
\end{equation}
where for $\mathbf{t}=[t_1:t_2:t_3]$ we let
\[
[\tran]_\times = 
\left(
\begin{smallmatrix}
0 & -t_3 & t_2\\
t_3 & 0 & -t_1\\
-t_2 & t_1 & 0
\end{smallmatrix}
\right)
\]
denote a $3\times 3$ matrix that represents, up to scale, taking the cross product with $\tran$, and 
$\mathcal{E}$ denotes the variety of \deff{essential matrices}:
\begin{equation}
    \label{eq:Vess}
    \mathcal{E} = \{ \E \in \PP (\CC^{3\times 3}) \mid \displaystyle\det \E =0, \, \, \E \E^\top \E -  \displaystyle\frac{1}{2} \, \tr (\E \E^\top)  \E = 0 \}.
\end{equation}
A regular locus is given by $U\subset \mathcal{E}$ such that all $E \in U$ have rank $2$ and the kernel of $\E$ is not spanned by an isotropic vector.
A birationally equivalent branched cover was constructed in~\cite{2Hilbert,van2019functorial}, where the authors construct moduli spaces obtained by letting the \emph{absolute conic}~\cite{HZ-2003} degenerate to a double line.
Explicitly, the branched cover $X \to \mathcal{E}$ is given by 
\[
X = \{ \left( [a_0:a_1:a_2:a_3],  [b_0:b_1:b_2:b_3] \right) \in \PP^3 \times \PP^3 \mid a_0 b_0 + a_1 b_1 + a_2 b_2 + a_3 b_3 = 0 \}
\]
\[
\left( [\mathbf{a}],  [\mathbf{b}] \right) \mapsto \left(\begin{smallmatrix}
     {a}_{0}{b}_{0}-{a}_{1}{b}_{1}-{a}_{2}{b}_{2}+{a}_{3}{b}_{3}&{a}_{1}{b}_{0}+{a}_{0}{b}_{1}+{a}_{3}{b}_{2}+{a}_{2}{b}_{3}&{a}_{2}{b}_{0}-{a}_{3}{b}_{1}+{a}_{0}{b}_{2}-{a}_{1}{b}_{3}\\
     {a}_{1}{b}_{0}+{a}_{0}{b}_{1}-{a}_{3}{b}_{2}-{a}_{2}{b}_{3}&-{a}_{0}{b}_{0}+{a}_{1}{b}_{1}-{a}_{2}{b}_{2}+{a}_{3}{b}_{3}&{a}_{3}{b}_{0}+{a}_{2}{b}_{1}+{a}_{1}{b}_{2}+{a}_{0}{b}_{3}\\
     {a}_{2}{b}_{0}+{a}_{3}{b}_{1}+{a}_{0}{b}_{2}+{a}_{1}{b}_{3}&-{a}_{3}{b}_{0}+{a}_{2}{b}_{1}+{a}_{1}{b}_{2}-{a}_{0}{b}_{3}&-{a}_{0}{b}_{0}-{a}_{1}{b}_{1}+{a}_{2}{b}_{2}+{a}_{3}{b}_{3}\\
     \end{smallmatrix}\right),
\]
and there exists a birational equivalence
\begin{center}
\begin{tikzcd}
X \arrow[d] \arrow[r, dashed] & \SOCC (3) \times \PP^2 \arrow[d] \\ \Ess \arrow[r] & \Ess .
\end{tikzcd}
\end{center}
where the bottom map is the identity. 
The top map may be given by
\[
\left( [\bs{a}],  [\bs{b}] \right) \mapsto 
\left(
(a_3 \, I + [\mathbf{a}]_\times) ([\mathbf{a}]_\times - a_3 \, I)^{-1}
      ,
      \,
 \left(\begin{smallmatrix}
      2\,{a}_{1}{b}_{0}-2\,{a}_{0}{b}_{1}+2\,{a}_{3}{b}_{2}-2\,{a}_{2}{b}_{3}\\
      2\,{a}_{2}{b}_{0}-2\,{a}_{3}{b}_{1}-2\,{a}_{0}{b}_{2}+2\,{a}_{1}{b}_{3}\\
      2\,{a}_{3}{b}_{0}+2\,{a}_{2}{b}_{1}-2\,{a}_{1}{b}_{2}-2\,{a}_{0}{b}_{3}\\
      \end{smallmatrix}\right)      
\right),
\]
where now
\[
[\mathbf{a}]_\times = 
\left(\begin{smallmatrix}
      0&{a}_{0}&{a}_{1}\\
      -{a}_{0}&0&{a}_{2}\\
      -{a}_{1}&-{a}_{2}&0\\
      \end{smallmatrix}\right).
\]
The map $\ratnoname{X}{\SOCC (3)}$ is undefined for $([\bs{a}], [\bs{b}])$ such that $\bs{a}$ lies on the isotropic quadric $\norm{\bs{a}}^2 = 0$ in $\PP^3.$ 
In~\cite{van2019functorial}, it was observed that a regular locus of $X \to \mathcal{E}$ is simply given by any $U\subset \mathcal{E}$ such that all $\E \in U$ have rank $2.$
Both branched covers have degree $2,$ and $\mon (X/ \mathcal{E})$ is the symmetric group $S_2$ acting on two letters.
We note that the map $\ratnoname{X}{\SOCC (3) \times \PP^2}$ is closely related to dual quaternions and Study coordinates for $\SE (3)$~\cite{DBLP:journals/ijrr/Daniilidis99}.
\end{example}
A \deff{factorization} of a branched cover $\ratnoname{X}{Z}$ is a commutative diagram
\begin{equation}
\label{eq:factorization-diagram}
\begin{tikzcd}
X \arrow[rd, dashed] \arrow[r,  dashed] & Y\arrow[d,dashed] \\& Z
\end{tikzcd}
\end{equation}
such that $\ratnoname{X}{Y}$ and $\ratnoname{Y}{Z}$ are branched covers. 
If $\deg (X/Y)$ and $\deg (Y/Z)$ are both strictly less than $\deg (X/Z),$ we say that the factorization is \deff{proper} and that the branched cover $\ratnoname{X}{Z}$ is \deff{decomposable}.
Otherwise, $\ratnoname{X}{Z}$ is \deff{indecomposable.} Proposition~\ref{prop:decomposable_iff_imprimitive}
implies that the decomposability of a branched cover can be determined from the Galois/monodromy group alone.
We recall that a \deff{block system} for the monodromy action
$\mon (X/Z) \acts X_z = \{ x_1 . \ldots x_d \},$ is a partition of $X_z = B_1 \cup \cdots \cup B_k,$ comprised of equally-sized blocks $B_1 , \ldots , B_k,$ which is preserved in the sense that blocks are always mapped to blocks under the group action.
The block systems associated to the action form a lattice under refinement, whose respective maximum and minimum elements are $\{ X_z \}$ and $\{ \{ x_1\} , \ldots , \{ x_d \} \}.$
If any other block systems exist, then $\mon (X/Z)$ is said to be \deff{imprimitive}, and otherwise it is \deff{primitive}.

Given a factorization~\eqref{eq:factorization-diagram}, we have $\deg (X/Z) = \deg (X/Y) \, \deg (Y/Z),$ and a partition
\begin{equation}
\label{eq:partition}
X_z = X_{y_1} \cup \cdots \cup X_{y_k} \end{equation}
with $k=\deg (Y/Z).$
The proof of Proposition~\ref{prop:factor} below shows that this is a block system for the monodromy action with blocks of size $\deg (X/Y).$
Conversely, imprimitivity implies decomposability.

\begin{proposition} \label{prop:decomposable_iff_imprimitive}
A branched cover is decomposable if and only if $\mon(X/Z)$ is imprimitive. 
\end{proposition}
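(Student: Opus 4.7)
The plan is to prove both directions by passing through the Galois-theoretic reformulation of Proposition~\ref{prop:galoismonodromy}, so that intermediate varieties in a factorization correspond to intermediate fields in $\CC(Z) \subset \CC(X)$, which in turn correspond to block systems on the fiber via the fundamental theorem of Galois theory (applied inside the Galois closure $\galclo{\CC(X)}/\CC(Z)$).

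For the forward direction, suppose we are given a proper factorization $X \dashrightarrow Y \dashrightarrow Z$. Shrinking $U$ to a regular locus for both $X \to Z$ and $Y \to Z$ simultaneously, the fiber $X_z$ is partitioned as in~\eqref{eq:partition} into the preimages $X_{y_1}, \ldots, X_{y_k}$ with $k = \deg(Y/Z)$, each of size $\deg(X/Y)$. I would verify that this partition is preserved by monodromy: a loop $\gamma$ in $U$ based at $z$ lifts first to $Y$, permuting the $y_i$, and then each partial lift to $X_{y_i}$ lifts consistently, so the block $X_{y_i}$ is sent to $X_{\sigma_\gamma(y_i)}$. Because the factorization is proper, $1 < \deg(X/Y) < \deg(X/Z)$ and $1 < k < \deg(X/Z)$, so this is a nontrivial block system, proving $\mon(X/Z)$ is imprimitive.

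For the converse, suppose $\{B_1,\ldots,B_k\}$ is a nontrivial block system. Label $x_1 \in B_1$ and let $H = \stab_{\mon(X/Z)}(B_1)$, the setwise stabilizer, so $\stab(x_1) \subsetneq H \subsetneq \mon(X/Z)$. Using Proposition~\ref{prop:galoismonodromy} to identify $\mon(X/Z)$ with $\Gal(X/Z)$, the Galois correspondence applied to $\galclo{\CC(X)}/\CC(Z)$ yields a strictly intermediate field $\CC(Z) \subsetneq L \subsetneq \CC(X)$ fixed by $H$ (recall that $\CC(X)$ itself corresponds to $\stab(x_1)$ under this correspondence). Since $L$ is a finitely generated extension of $\CC$ of transcendence degree $\dim Z$, there exists an irreducible variety $Y$ with $\CC(Y) \cong L$, and the inclusions $\CC(Z) \subset L \subset \CC(X)$ induce dominant rational maps $X \dashrightarrow Y \dashrightarrow Z$ whose composition agrees with the given map $X \dashrightarrow Z$. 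Each intermediate map is dominant between irreducible varieties of equal dimension and so is a branched cover, and the degrees are $\deg(X/Y) = [H : \stab(x_1)] = |B_1|$ and $\deg(Y/Z) = [\mon(X/Z) : H] = k$, both strictly between $1$ and $\deg(X/Z)$, making the factorization proper.

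The main obstacle I anticipate is the bookkeeping in the reverse direction: one must be careful that the stabilizer of a point indeed corresponds to $\CC(X)$ inside $\galclo{\CC(X)}$ (this is part of the content already embedded in Proposition~\ref{prop:galoismonodromy}), and one must invoke the standard equivalence between finitely generated field extensions of $\CC$ and birational classes of irreducible $\CC$-varieties to realize $L$ geometrically as some $Y$. The forward direction is conceptually straightforward but requires being mindful about shrinking $U$ so that both covers $X \to Z$ and $Y \to Z$ share a common regular locus, which is possible since the intersection of two nonempty Zariski-opens in an irreducible variety is again nonempty and Zariski-open.
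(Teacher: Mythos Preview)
Your proposal is correct and matches the approach the paper indicates: the paper does not give a self-contained proof of this proposition but instead observes that the forward direction is established in the proof of Proposition~\ref{prop:factor} (via exactly the path-lifting argument you describe) and cites~\cite{Brysiewicz} for a Galois-theoretic proof, which is precisely the route you take for the converse via the Galois correspondence and the field--variety dictionary. Your careful remarks about shrinking $U$ and identifying $\CC(X)$ with the fixed field of a point stabilizer are appropriate and fill in details the paper leaves implicit.
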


Proposition~\ref{prop:decomposable_iff_imprimitive} dates back to the work of Ritt~\cite{Ritt1}, who characterized the possible decompositions of branched covers  $\CC \ni x \mapsto p(x) \in \CC$ given by a univariate polynomial $p.$
A Galois-theoretic proof of Proposition~\ref{prop:decomposable_iff_imprimitive} may be found, for instance, in~\cite{Brysiewicz}.
If we know $\mon (X/Z),$ it is also possible to identify $\mon (X/Y)$ and $\mon (Y/Z)$ occuring in the factorization~\eqref{eq:factorization-diagram}, as the next proposition shows.
\begin{prop}
\label{prop:factor}
Consider a factorization of branched cover as in Equation~\eqref{eq:factorization-diagram}.
For fixed generic $z\in Z,$ partition $X_z$ as in Equation~\ref{eq:partition}.
The action $\mon (X/Z) \acts X_z$ induces two other group actions which are equivalent to the monodromy groups of the individual factors:
\begin{itemize}
\item[1)] action on blocks: $\mon (X/Z) \acts \{ X_{y_1}, \ldots , X_{y_k} \},$ which is equivalent to $\mon (Y/Z).$
\item[2)] action on a single block: $\mon (X/Z)_{X_{y}} \acts X_{y},$ where $\mon (X/Z)_{X_{y}}$ denotes the stabilizer of the set $X_{y}$ under the action by $\mon (X/Z)\acts X_z.$
This is equivalent to $\mon (X/Y),$ and thus independent of the choice $y\in Y_z.$
\end{itemize}
\end{prop}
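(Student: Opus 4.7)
The plan is to work topologically, exploiting the monodromy interpretation of the Galois/monodromy group supplied by Proposition~\ref{prop:galoismonodromy}. First, I would choose a regular locus $U \subset Z$ that is simultaneously regular for the branched covers $\ratnoname{X}{Z}$ and $\ratnoname{Y}{Z}$, fix a basepoint $z \in U$, and write $Y_z = \{y_1, \ldots, y_k\}$ so that $X_z = X_{y_1} \sqcup \cdots \sqcup X_{y_k}$. This partition is the candidate block system, and the entire proof is a book-keeping exercise in the unique path-lifting property~\eqref{eq:permutation-lift}.

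For part (1), I would take any loop $\gamma$ in $U$ based at $z$ and lift it through both covers $\ratnoname{X}{Z}$ and $\ratnoname{Y}{Z}$. Commutativity of diagram~\eqref{eq:factorization-diagram} forces the composition of a lift $\widetilde{\gamma}_{x}$ in $X$ with the rational map $\ratnoname{X}{Y}$ to be itself a lift of $\gamma$ in $Y$; by uniqueness it coincides with the lift of $\gamma$ in $Y$ beginning at the image $y_i$ of $x \in X_{y_i}$. Consequently the monodromy permutation $\sigma_\gamma$ of $X_z$ sends the block $X_{y_i}$ onto the block $X_{y_j}$, where $y_i \mapsto y_j$ is the monodromy permutation of $\gamma$ on $Y_z$. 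The assignment $X_{y_i} \leftrightarrow y_i$ is an equivariant bijection that identifies the $\mon(X/Z)$-action on blocks with the $\mon(Y/Z)$-action on $Y_z$, proving claim (1).

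For part (2), I would fix $y = y_1$, choose a regular locus $U_Y \subset Y$ for $\ratnoname{X}{Y}$ with $y \in U_Y$, and shrink $U$ further so that every loop in $U$ based at $z$ has its lift through $\ratnoname{Y}{Z}$ contained in $U_Y$. The key step is to match up two pieces of homotopy data. On the one hand, a loop $\gamma \in \pi_1(U, z)$ lies in the stabilizer of the block $X_y$ exactly when its lift $\widetilde{\gamma}_y$ through $\ratnoname{Y}{Z}$ is a loop at $y$, and the permutation of $X_y$ it induces as an element of $\mon(X/Z)_{X_y}$ agrees with the permutation induced by $\widetilde{\gamma}_y$ as an element of $\mon(X/Y)$ — both are obtained by further lifting the \emph{same} path through $\ratnoname{X}{Y}$. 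On the other hand, every $\delta \in \pi_1(U_Y, y)$ projects to a loop in $\pi_1(U, z)$ whose lift back through $\ratnoname{Y}{Z}$ at $y$ is $\delta$ itself. These two observations show the images of $\mon(X/Z)_{X_y}$ and $\mon(X/Y)$ coincide as subgroups of $\sym(X_y)$, which is precisely equivalence of the permutation actions. Independence of the choice of $y \in Y_z$ then follows from the fact that $\mon(Y/Z)$ acts transitively on $Y_z$ (by irreducibility of $Y$, as noted after Definition~\ref{def:branchedcover}), so the stabilizers $\mon(X/Z)_{X_{y}}$ are conjugate in $\mon(X/Z)$.

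The main technical obstacle I anticipate is the careful setup of the regular loci. Because $\ratnoname{X}{Y}$ is only rational, the loci $U$ and $U_Y$ must be shrunk so that path-lifting through one map followed by the rational projection always lands in a locus where the next round of lifting is defined and unbranched; once this bookkeeping is in place, every other assertion is an immediate consequence of unique path-lifting and the commutativity of~\eqref{eq:factorization-diagram}. A purely Galois-theoretic alternative is available using the tower $\CC(Z) \subset \CC(Y) \subset \CC(X)$, with the stabilizer appearing as $\Gal(\galclo{\CC(X)}/\CC(Y))$ and the block action identified with the quotient by this normal subgroup; I would mention this as a parallel route but carry out the monodromy argument, which is more in keeping with the paper's later numerical computations.
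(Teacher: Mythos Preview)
Your topological argument is correct and follows essentially the same route as the paper: both parts rest on unique path-lifting and the observation that a loop in $Z$ lifts to a loop at $y\in Y$ precisely when the induced permutation stabilizes the block $X_y$, with the regular loci shrunk so that the preimage of $U$ in $Y$ lies in a regular locus for $\ratnoname{X}{Y}$. One small correction to your closing remark: in the Galois-theoretic alternative, $\Gal(\galclo{\CC(X)}/\CC(Y))$ is the stabilizer of a block but is \emph{not} normal in $\Gal(\galclo{\CC(X)}/\CC(Z))$ in general---the block action is the coset action, and its kernel is the normal core of that subgroup.
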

\begin{proof}
1) For each $\sigma_\gamma \in \mon (X/Z),$ there is an induced permutation of the blocks:
\[
\widetilde{\sigma_\gamma } =
\left(\begin{array}{ccccc}
X_{y_1} & \cdots & X_{y_k} \\
\sigma_\gamma (X_{y_1}) & \cdots & \sigma_{\gamma} (X_{y_k})
\end{array}\right).
\]
Indeed, suppose that $x, x ' \in X_{y_i}$ are such that $\sigma_\gamma (x) \in X_{y_j}, \sigma_\gamma (x') \in X_{y_k},$ and consider the lift $\widetilde{\gamma}: [0,1] \rightarrow Y$ starting at $y_i.$ 
We must have both $\widetilde{\gamma}(1) = y_j$ and $\widetilde{\gamma}(1) = y_k.$ 
Hence $k=j$ by the unique path-lifting property applied to $\ratnoname{Y}{Z}$, showing that $\sigma_\gamma $ preserves the partition into blocks.
In this way we get a group homomorphism
\begin{equation}
\label{eq:hom1}
  \begin{split}
\mon (X/Z) &\to \sym ( \{ X_{y_1} , \ldots , X_{y_k} \} )\\
\sigma_\gamma &\mapsto \widetilde{\sigma_\gamma },
\end{split}
\end{equation}
which represents the action of $\mon (X/Z)$ on the blocks.
Now, there is also an injective group homomorphism
\begin{equation}
\label{eq:hom2}
  \begin{split}
\mon (Y/Z ) &\to \sym ( \{ X_{y_1} , \ldots , X_{y_k} \} )\\
\tau_\gamma &\mapsto 
\left(\begin{array}{ccccc}
X_{y_1} & \cdots & X_{y_k} \\
X_{\tau_\gamma (y_1)} & \cdots & X_{\tau_\gamma (y_k)}
\end{array}\right)
\end{split}
\end{equation}
obtained by restricting the natural isomorphism $\sym (Y_z) \cong \sym (\{ X_{y_1}, \ldots , X_{y_k} \} )$ that identifies a point $y_i \in Y_z$ with its corresponding block $X_{y_i}.$
We wish to show that maps~\eqref{eq:hom1} and~\eqref{eq:hom2} have the same image.
This follows easily if we restrict $\gamma $ in both maps to be loops 
contained in a regular locus for $\ratnoname{X}{Z}$:
the lifts of $\gamma $ to $Y$ (which, by our restriction, also lift to $X$) determine the corresponding permutation of blocks in $X$, and vice-versa.
Indeed, we have $\sigma_\gamma  (X_y)  = X_{\tau_\gamma (y)}$ for any $y\in Y_z$.
To see this, it is enough to show one set is contained in the other.
A point $x\in \sigma_\gamma (X_y)$ is the endpoint of some lift of $\gamma $ to $X.$
The image of this lift in $Y$ is itself a lift $\wt{\gamma } : [0,1] \to Y$ with $\wt{\gamma} (0) = y$---hence $\wt{\gamma } (1) = \tau_\gamma (y),$ and the endpoint of our original lift $x$ is in $X_{\tau_\gamma (y)}.$\\\\
2) The proof amounts to showing that a loop $\gamma$ in $Z$ lifts to a loop in $Y$ if and only if $\sigma_\gamma  \in \mon (X/Z)$ stabilizes each of the blocks.
As in the previous part, this only true if we consider loops in a suitably small regular locus $U\subset Z.$
It suffices to take $U$ contained in a regular locus for $\ratnoname{X}{Z}$ and whose preimage in $Y$ is a regular locus for $\ratnoname{X}{Y}.$
\end{proof}

Proposition~\ref{prop:decomposable_iff_imprimitive} shows that an arbitrary branched cover $\ratnoname{X}{Z}$ factors as a composition of indecomposable branched covers:
\begin{equation}
\label{eq:decomposition}
X = Y_0 \dashrightarrow Y_1 \dashrightarrow \cdots \dashrightarrow Y_{k-1} \dashrightarrow Y_k = Z.
\end{equation}
Such a factorization corresponds to a maximal chain in the lattice of block systems, and the associated degrees can be read off from the block sizes.
Equivalently, for any $x\in X_z,$ a maximal chain in the lattice of blocks of $\mon (X/Z)$ corresponds to a chain of subgroups that contain the stabilizer $\mon (X/Z)_x$ (cf.~\cite[proof of Theorem 9.15]{Rotman})
\[
\mon (X/Z)_x  = G_0 \subset G_{1} \subset \cdots \subset G_{k-1} \subset G_k = \mon (X/Z),
\]
and we have $\deg (Y_i / Y_{i+1}) = [G_{i+1} : G_i ]$ for $i=0, \, \ldots , \, k-1.$
The decomposition~\eqref{eq:decomposition} is not unique.
In fact, as Ritt already understood~\cite[p.~53]{Ritt1}, there are many examples where even the multi-set of degrees $\deg (Y_i / Y_{i+1})$ is not unique.
See~\cite[Example 25]{Gutierrez} for one such explicit example.

Finally, we define and carefully study the deck transformations of a branched cover, of which the twisted pair symmetry from the introduction is a special case. 

\begin{defn}
\label{def:deck}
A birational equivalence from a branched cover to itself which fixes the base is called a \deff{deck transformation.}
Explicitly, for $\rat{f}{X}{Z}$ a deck transformation $\rat{\Psi}{X}{X}$ must satisfy $f \circ \Psi = f$ whenever both maps are defined. 
The deck transformations form a group under composition which acts on a generic fiber $X_z.$
The deck transformation group can be naturally identified with the automorphisms of $\CC (X)$ which fix $\CC (Z),$ denoted $\Aut (X/Z).$
\end{defn}

Analogously to decomposability, Proposition~\ref{prop:deck-centralizer} shows that the existence of a nontrivial deck transformation can be decided from the Galois/monodromy group alone.
This turns out to be stronger than decomposability in general.
We learned of Proposition~\ref{prop:deck-centralizer} from the sources~\cite{awtrey,Cukierman}.
Since it seems less well-known outside of the literature on Galois/monodromy groups, we give a self-contained proof.
In topology, a deck transformation of a covering map $f$ can be any continuous function $\Psi$ satisfying $f\circ \Psi = f.$
Our proof of Proposition~\ref{prop:deck-centralizer} reveals that, for a rational branched cover $f $ with regular locus $U,$ the deck transformations of $f_{|f^{-1}(U) } $ in the topological sense are always rational maps in the sense of Definition~\ref{def:deck}.
Before giving the proof, we first consider three illustrative examples.

\begin{example}
\label{ex:deck-quad}
Let $X = \VV(x^2+ax+b) \subset \CC^3$, $Z = \CC^2$ and $f : X \to Z$ be the degree-$2$ branched cover defined by coordinate projection $f(x,a,b) = (a,b).$ 
The deck transformation defined by $\Psi (x,a,b) = (-x - a, a, b)$ acts on a generic fiber $X_{(a,b)}$ by permuting the two roots of the quadratic equation  $x^2 + a x + b=0.$
\end{example}

\begin{example}
\label{ex:deck-pnp}
Ask et al.~\cite{Ask12} define a polynomial system $F(\bs{x})$ with $p$-fold symmetry to be such that $F(\bs{x}) =0$ implies $F( \omega \, \bs{x})=0 $ whenever $\omega $ is a $p$-th root of unity.
For example, the equations
\begin{figure}
    \centering




\includegraphics[scale = 0.4]{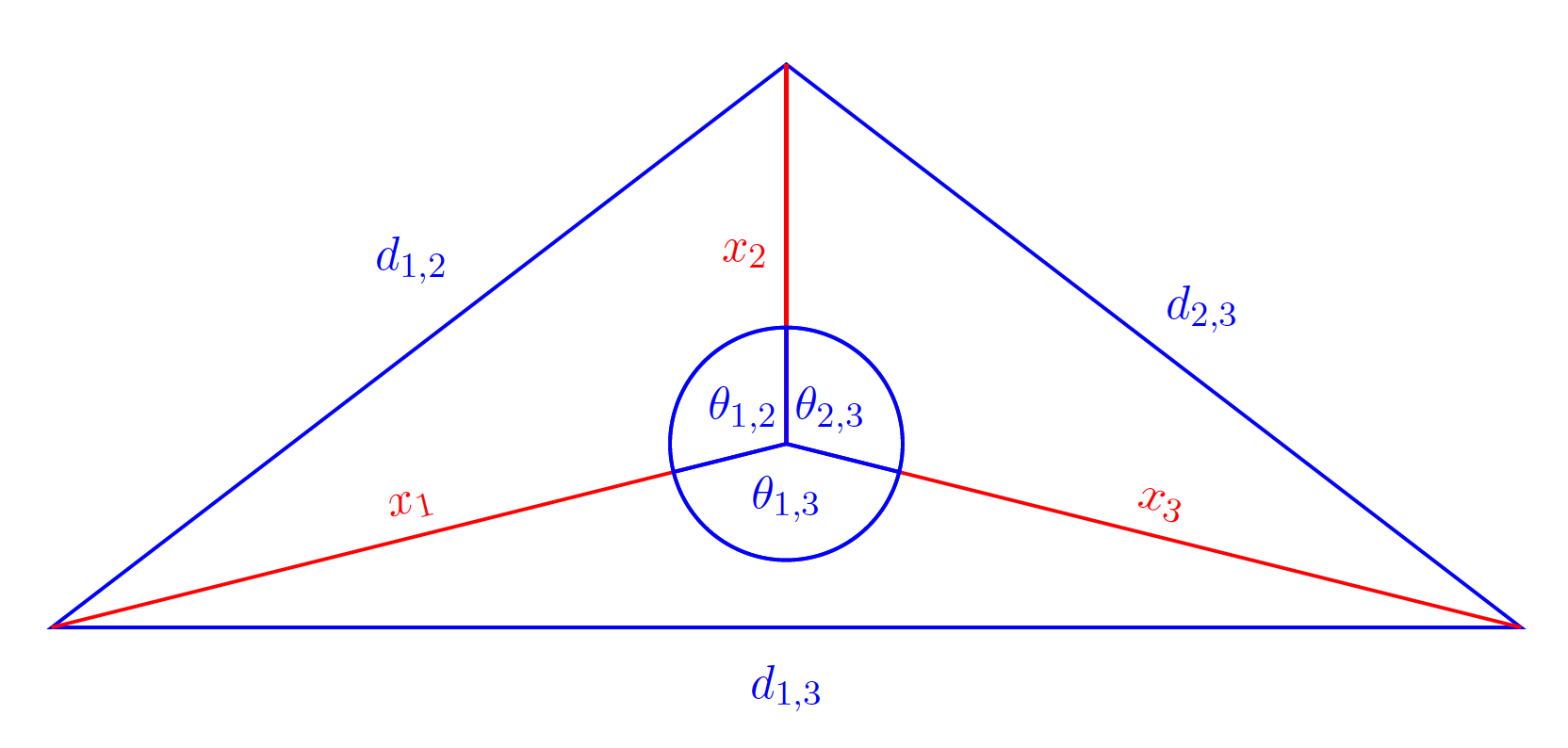}
\caption{Frontal view of the P3P problem: \red{$x_1,x_2,x_3$} are unknown.
}
\label{fig:p3p}
\end{figure}

\begin{equation}
\label{eq:p3p}
\begin{split}
f_{1,2} = x_1^2 + x_2^2 - c_{1,2} x_1 x_2 - d_{1,2}^2\\
f_{1,3} = x_1^2 + x_3^2 - c_{1,3} x_1 x_3 - d_{1,3}^2\\
f_{2,3} = x_2^2 + x_3^2 - c_{2,3} x_2 x_3 - d_{2,3}^2
\end{split}
\end{equation}
have a $2$-fold sign symmetry: $(x_1,x_2,x_3)\mapsto (-x_1, - x_2, -x_3).$
These equations define the famous Perspective-3-Point problem or \deff{P3P problem}.
Here, each $c_{i,j}$ is equal to $2 \cos \theta_{i,j}$ as in Figure~\ref{fig:p3p}.
Letting $X$ denote the vanishing locus of~\eqref{eq:p3p} in $\CC^{9},$ the coordinate projection onto the space of knowns $\CC^6$ is a branched cover with a deck transformation given by the sign-symmetry.
We will return to the P3P problem in Section~\ref{sec:abspose}.
\\\\
Ask et al.~\cite{Ask12} develop algorithms for detecting and exploiting \emph{partial} $p$-fold symmetries (occurring in only some subset of the variables) in the automatic generation of polynomial solvers.
These methods were generalized by Larsson and  {\AA}str{\"o}m~\cite{LarssonSymmetries} to the case of \emph{weighted} partial-$p$ fold symmetries.
In general, a branched cover with a weighted partial-$p$ fold symmetry will have a deck transformation of order $p,$ degree $e \, p$ for some integer $e ,$ and its Galois/monodromy group will be a subgroup of $C_p \wr S_e.$
\end{example}

\begin{example}
\label{ex:deck-resolve-twisted}

Example~\ref{ex:resolve-twisted} contains two interesting \deff{Galois covers}---the Galois/monodromy group and the deck transformation group are isomorphic.
For $\ratnoname{\SOCC (3) \times \PP^2}{\Ess},$ the action on the fiber applies the twisted pair map as in~\eqref{eq:twisted-pair}.
For $\ratnoname{X}{\Ess},$ the action swaps coordinates $\left([\bs{a}], [\bs{b}]\right) \mapsto \left([\bs{b}], [\bs{a}]\right).$  
\end{example}

\begin{prop}
\label{prop:deck-centralizer}
Let $\ratnoname{X}{Z}$ be a branched cover and fix generic $z\in Z$.
We may identify the deck transformation group with a subgroup of $\sym (X_z)$ by restricting functions to $X_z.$
This permutation group is \emph{equal} to the centralizer of $\mon (X/Z)$ in $\sym (X_z).$
\end{prop}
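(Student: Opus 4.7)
My plan is to prove the two inclusions separately. For the easier direction I would use unique path-lifting; for the harder direction I would translate into Galois theory and reduce to a short fact about centralizers in transitive permutation groups.

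\emph{Deck transformations centralize monodromy.} Let $\Psi : X \dashrightarrow X$ be a deck transformation. Shrink the regular locus $U$ if necessary so that $\Psi$ restricts to a biholomorphism of $f^{-1}(U)$. Fix a loop $\gamma$ in $U$ based at $z$ and a point $x \in X_z$. Then $\Psi \circ \widetilde{\gamma}_x$ is continuous, starts at $\Psi(x)$, and projects to $\gamma$ under $f$ because $f \circ \Psi = f$. By the uniqueness part of the path-lifting property, $\Psi \circ \widetilde{\gamma}_x = \widetilde{\gamma}_{\Psi(x)}$. Evaluating at $t = 1$ gives $\Psi(\sigma_\gamma(x)) = \sigma_\gamma(\Psi(x))$, so $\Psi|_{X_z}$ lies in the centralizer of $\mon(X/Z)$.

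\emph{Elements of the centralizer come from deck transformations.} Write $L = \galclo{\CC(X)}$, $K = \CC(Z)$, $G = \Gal(L/K)$, and $H = \Gal(L/\CC(X))$. Pullback of rational functions identifies $\Aut(X/Z)$ with $\Aut(\CC(X)/K)$, and the Galois correspondence identifies the latter with $N_G(H)/H$. By Proposition~\ref{prop:galoismonodromy}, choosing a basepoint $x_0 \in X_z$ gives a bijection $X_z \longleftrightarrow G/H$ under which the monodromy action becomes left multiplication by $G$ and the deck action becomes right multiplication $gH \mapsto g\sigma^{-1}H$ for $\sigma$ any lift in $N_G(H)$. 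The proposition therefore reduces to the purely group-theoretic claim that the image of right multiplication by $N_G(H)/H$ is exactly the centralizer of left multiplication by $G$ inside $\sym(G/H)$.

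For that claim, commuting is immediate from associativity in $G$. Conversely, a permutation $\pi$ of $G/H$ commuting with left multiplication satisfies $\pi(gH) = g \cdot \pi(H)$, so it is determined by $\pi(H)$; writing $\pi(H) = \sigma^{-1}H$ and using $\pi(H) = \pi(hH) = h \pi(H)$ for every $h \in H$ yields $h\sigma^{-1}H = \sigma^{-1}H$, i.e., $\sigma h \sigma^{-1} \in H$ for all $h$, which says $\sigma \in N_G(H)$. Hence every centralizing $\pi$ arises from some $\sigma \in N_G(H)$, completing the reverse inclusion.

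The step I expect to be the main obstacle is not the group-theoretic fact in the last paragraph but the bookkeeping that sits before it: verifying that the topologically defined deck group of the branched cover really does correspond to $\Aut(\CC(X)/K)$, and that its action on $X_z$ — when $X_z$ is identified with $G/H$ via the basepoint $x_0$ — is literally right multiplication by $\sigma^{-1}$. This requires setting up the identifications consistently (for instance, realizing $X_z$ as the $H$-orbits in the set of $K$-embeddings $\CC(X) \hookrightarrow L$), but once that dictionary is in place the proof becomes the short argument above.
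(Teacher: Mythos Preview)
Your proof is correct and takes a genuinely different route from the paper's. For the containment $\Aut(X/Z) \subseteq \cent(\mon(X/Z))$ you argue essentially as the paper does, via unique path-lifting. For the reverse containment, however, the paper works topologically: given a centralizing permutation $\sigma$, it constructs a map $\Psi_\sigma$ pointwise by lifting paths (following Hatcher's Proposition~1.39), uses the centralizer hypothesis to prove well-definedness, and then---the delicate step---shows $\Psi_\sigma$ is rational by observing that it is locally a composition of $f$ with a holomorphic local inverse of $f$, hence meromorphic, and invoking the fact that meromorphic maps between projective varieties are rational. Your algebraic route sidesteps this rationality argument entirely: once $\Aut(X/Z)$ is identified with $N_G(H)/H$ via the Galois correspondence, rationality comes for free, and what remains is the clean permutation-group lemma that the centralizer of $G$ acting on $G/H$ in $\sym(G/H)$ is exactly right multiplication by $N_G(H)/H$. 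The trade-off is the one you flag: the paper's proof is self-contained and constructive (one sees explicitly how $\Psi_\sigma$ moves points), whereas yours front-loads the work into the dictionary between points of $X_z$, cosets $G/H$, and $H$-orbits of $K$-embeddings $\CC(X)\hookrightarrow L$, after which the argument is nearly trivial. Both approaches are standard; yours is closer in spirit to the Galois-theoretic references the paper cites for this proposition.
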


\begin{proof}
We abbreviate the deck transformation group and centralizer subgroup by $D$ and $C,$ respectively.
We define a map between these groups as follows:
\begin{align*}
    \varphi: D &\rightarrow \sym (X_z) \\
    \Psi &\mapsto \permshort{x_1}{x_d}{\Psi (x_1)}{\Psi (x_d)}
\end{align*}
To prove Proposition~\ref{prop:deck-centralizer}, we verify the following properties of $\varphi $:
\begin{itemize}
    \item[1)] $\varphi$ is a group homomorphism.
    \item[2)] $\varphi $ is injective.
    \item[3)] The image of $\varphi $ is contained in $C.$
    \item[4)] $C$ is contained in the image of $\varphi $---more explicitly, for all $\sigma \in C$ there exists a deck transformation $\Psi_\sigma \in D$ whose restriction to the fiber $X_z$ equals the permutation $\sigma .$
\end{itemize}
Property 1) is straightforward. 
Properties 2) and 3) both follow from the unique path-lifting property.
For instance, if $\Psi (x_i)=x_i$ for $i=1,\ldots , d,$ then for generic $x\in X$ will be the endpoint of the lift $\widetilde{\gamma}$ of some path $\gamma $ in $Z$ based at $z$---if $x_i=\Psi (x_i)$ is the initial point of this lift, then we must have $\Psi \circ \widetilde{\gamma } = \widetilde{\gamma },$ so that in particular $\Psi (x) = \Psi \circ \widetilde{\gamma } (1) = \widetilde{\gamma } (1) = x.$
This gives Property 2).
The proof of Property 3) is very similar, and may also be found, for instance, in~\cite[Proposition 1.3]{Cukierman}.\\\\
It remains to show Property 4).
We do so by first constructing a map $\rat{\Psi_\sigma}{X}{X}$ pointwise via lifting paths.
The argument is analagous to the proof in~\cite[Propsition 1.39]{Hatcher}.
Fix $x_0\in X_z.$ For generic $x\in X,$ there exists a path $\alpha_x : [0,1] \to X$ from $x_0$ to $x$ whose image in $Z$ is contained in a regular locus for $\ratnoname{X}{Z}.$
Define $\overline{\alpha_x}$ to be the lift based at $\sigma (x_0)$ whose image in $Z$ coincides with the image of $\alpha .$
We define 
\begin{center}
$\rat{\Psi_\sigma }{X}{X}$\\
$x \mapsto \overline{\alpha_x} (1).$
\end{center}
\begin{figure}
\begin{center}
\includegraphics[width=18em]{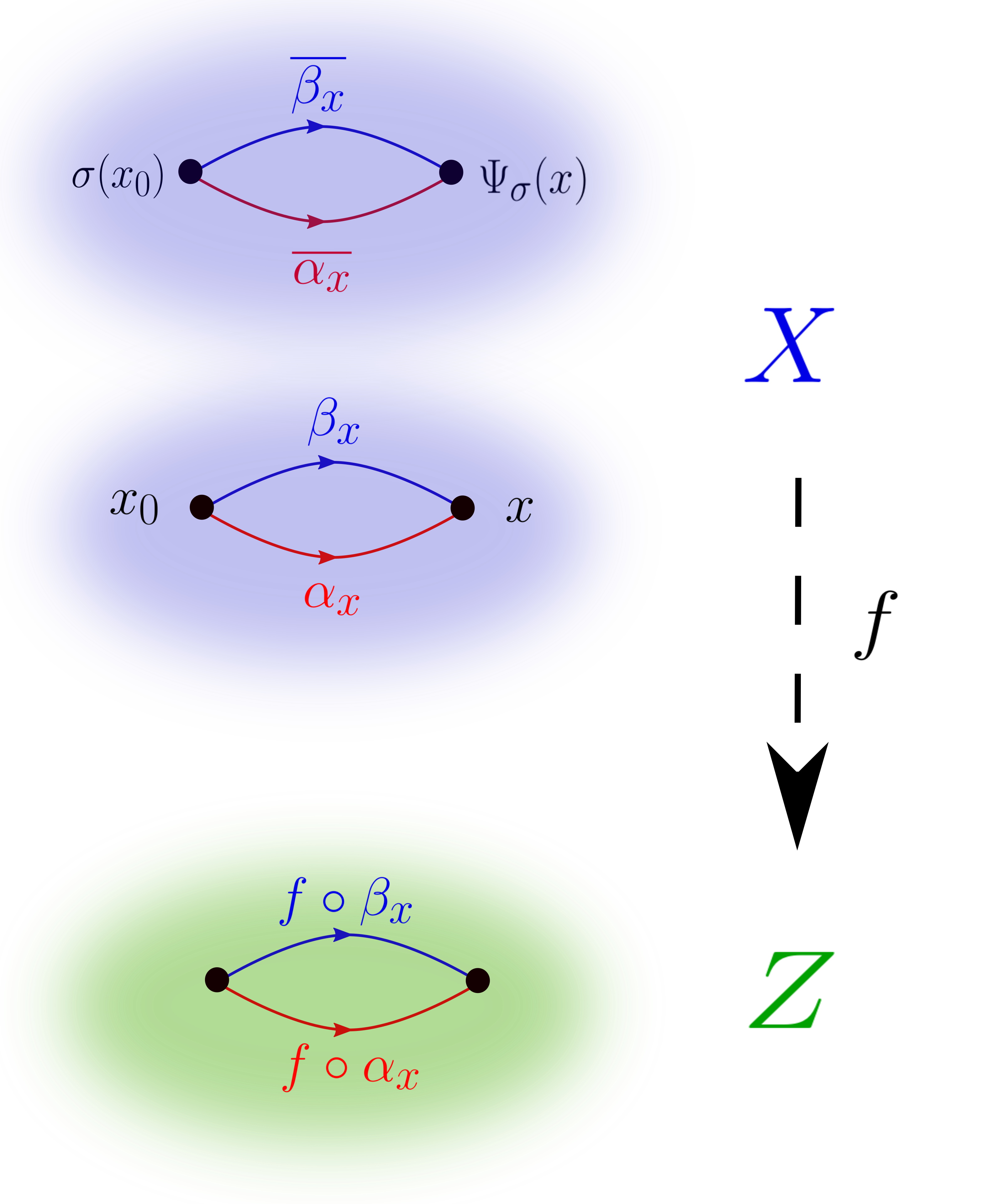}
\end{center}
\caption{Construction and well-definedness of $\Psi_\sigma .$}\label{fig:deck}
\end{figure}
First of all, we must show that $\Psi_\sigma$ is well-defined.
This means that for any other path $\beta_x$ from $x_0$ to $x$ we must have $\overline{\beta_x}(1) = \overline{\alpha_x}(1).$
We refer the reader to Figure~\ref{fig:deck} to more easily follow the argument.
Consider the loop $\gamma $ based at $f(x_0)$ in $Z$ obtained by concatenating $\overleftarrow{f\circ \beta_x}$ (the reverse of the path $f\circ \beta_x$) with $f\circ \alpha_x .$
Since $\sigma $ and $\sigma_\gamma $ commute, we have that
\begin{align*}
\sigma_\gamma (\sigma (x_0)) &= \sigma (\sigma_\gamma (x_0))\\
&= \sigma (x_0).
\end{align*}
Thus $\sigma_\gamma $ fixes $\sigma (x_0),$
and it follows that the lift $\widetilde{\gamma }$ based at $\sigma (x_0)$ is a loop.
This implies that $\overline{\alpha_x}$ and $\overline{\beta_x}$ have the same terminal point $\Psi_\sigma (x),$ proving well-definedness.\\\\
Consider now an arbitrary $x\in X_z.$ 
Note that in this case $f\circ \alpha_x $ is a loop.
We calculate
\begin{align*}
\sigma (x) &= \sigma ( \sigma_{f\circ \alpha_x} (x_0))\\
&= \sigma_{f\circ \alpha_x} (\sigma (x_0))\\
&= \Psi_\sigma (x).
\end{align*}
Thus, restricting $\Psi_\sigma $ to $X_z$ yields the permutation $\sigma .$
Moreover, by definition of $\Psi_\sigma$ we have that $f \circ \Psi_\sigma = f$ on the locus of points where both maps are defined.
It remains to show that $\Psi_\sigma $ is a rational map, since then it will also follow that $\Psi_{\sigma^{-1}}$ is a rational inverse.
First we note that, in a suitably small neighborhood of any generic point $x\in X$, we can write $\Psi_\sigma = g_x \circ f,$ where $g_x$ is a holomorphic local inverse of $f.$
Such an expression for $\Psi_\sigma$ exists for any $x$ in some Zariski open subset of $X.$
It follows that $\Psi_\sigma $ is a meromorphic map from $X$ to itself---in other words, it is holomorphic after restricting to a Zariski-open $U\subset X.$
To finish the proof, we may use the well-known fact that all meromorphic
maps between projective varieties are rational.\footnote{We recall the words of Mumford~\cite[Ch.~4]{Mumford}: ``This should be viewed as a generalization of the old result that the only everywhere meromorphic functions on $\CC \cup \{ \infty \} $ are rational functions."}
Indeed, if $X$ and $Z$ are quasiprojective varieties, we may replace them with their projective closures $\overline{X}, \overline{Z},$ to get a birationally equivalent $\ratnoname{\overline{X}}{\overline{Z}}.$
\end{proof}

\begin{prop}
\label{prop:deckImpliesDecomposable}
A branched cover $\ratnoname{X}{Z}$ of degree $d$ with a nontrivial deck transformation $\Psi $ is either decomposable or its Galois/monodromy group is cyclic of order $d.$
In the latter case, $\mon (X/Z)$ is imprimitive precisely when $d$ is composite.
\end{prop}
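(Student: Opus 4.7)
The plan is to analyze the centralizer $C \leq \sym(X_z)$ of $G = \mon(X/Z)$, which by Proposition~\ref{prop:deck-centralizer} equals the deck transformation group acting on the fiber; the hypothesis that $\Psi$ is nontrivial gives $C \neq \{\mathrm{id}\}$. The first observation I would establish is that $C$ acts \emph{semiregularly} on $X_z$: if $\sigma \in C$ fixes a point $x$, then for any $y = g \cdot x$ (using transitivity of $G$) one has $\sigma \cdot y = \sigma g \cdot x = g \sigma \cdot x = g \cdot x = y$, so $\sigma$ is the identity. In particular, $|C|$ divides $d$, and the $C$-orbits partition $X_z$ into $d/|C|$ blocks of size $|C|$. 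Because $C$ commutes with $G$, this partition is $G$-invariant, hence a block system.

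I would then dichotomize on $|C|$. If $1 < |C| < d$, the $C$-orbits give a nontrivial block system, so $G$ is imprimitive and Proposition~\ref{prop:decomposable_iff_imprimitive} yields decomposability. If $|C| = d$, the action of $C$ is regular; fixing a basepoint $x_0$ and identifying $X_z$ with $C$ via $c \mapsto c \cdot x_0$, the group $C$ becomes the left-translation subgroup of $\sym(C)$, whose centralizer is the (also regular) right-translation subgroup, of order $d$ and abstractly isomorphic to $C$. Since $G \subseteq C_{\sym(X_z)}(C)$, transitivity of $G$ on a set of size $d$ forces $|G| = d$ with $G$ equal to the right-translation subgroup, so $G$ itself acts regularly on $X_z$.

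To finish the first statement, I would invoke the standard correspondence for a regular permutation group: under the identification $X_z \cong G$, block systems are in bijection with subgroups of $G$, with blocks being the left cosets. So if the cover is indecomposable ($G$ primitive), $G$ has no proper nontrivial subgroup and must therefore be cyclic of prime order $d$; if the cover is decomposable, the first alternative of the dichotomy holds automatically. The second statement then follows by applying the same correspondence to $G \cong C_d$: the subgroups of a cyclic group of order $d$ are indexed by divisors of $d$, so a nontrivial block system exists if and only if $d$ is composite, which by Proposition~\ref{prop:decomposable_iff_imprimitive} is precisely the condition for imprimitivity.

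The step I expect to require the most care is the setup of the regular case $|C| = d$, specifically identifying the centralizer of a regular subgroup inside $\sym(X_z)$ with the ``opposite'' regular subgroup and then using transitivity to conclude that $G$ must coincide with it. Once this is in hand, the rest is a short application of the subgroup/coset description of blocks together with the observation that a group with no proper nontrivial subgroups must be cyclic of prime order. A small point worth flagging explicitly is that the dichotomy in the statement is not exclusive---when $G \cong C_d$ with $d$ composite, the cover is also decomposable, which is exactly what the second sentence records.
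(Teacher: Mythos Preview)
Your proof is correct, and the underlying idea---that orbits of deck transformations form a block system for the monodromy action---is the same as the paper's. The execution differs in one respect worth noting. The paper works with the cyclic subgroup $\langle \Psi \rangle$ rather than the full centralizer $C$: if the $\langle\Psi\rangle$-orbits do not yield a proper block system, then $\Psi$ is a single $d$-cycle, so $C_d \subset \cent(\mon(X/Z))$, whence $\mon(X/Z) \subset \cent(C_d) = C_d$, and transitivity forces $\mon(X/Z) = C_d$ directly---cyclicity comes for free, with no case split on primitivity needed. Your route through the full $C$ establishes only that $G$ is regular in the $|C|=d$ case, which then requires the extra observation that a regular primitive group must be cyclic of prime order; this is a small detour, though it does make explicit the pleasant fact that $G \cong C$ in the regular case. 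Your final remark that the dichotomy is not exclusive is apt and matches the paper's second sentence.
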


\begin{proof}
Partition $X_z$ into the orbits under repeated application of $\Psi .$
This partition is preserved under the monodromy action. Thus, the Galois group is imprimitive if this partition is nontrivial.
Otherwise, the action of $\Psi $ on $X_z$ generates a cyclic group $C_d \subset S_d.$
Letting $\cent (\cdot )$ denote the centralizer in $S_d$, we have $C_d \subset \cent (\mon (X/Z)),$ which holds and only if $\mon (X/Z) \subset \cent (C_d) = C_d .$
Since $\mon (X/Z)$ is transitive, we must have $\mon (X/Z) = C_d.$
\end{proof}

In general, a decomposable branched cover need not have any deck transformations.
However, a converse to Proposition~\ref{prop:deckImpliesDecomposable} does hold in a special case frequently encountered in practice.

\begin{proposition} \label{prop:bir_order_2}
$\ratnoname{X}{Z}$ has a deck transformation of order $2$ if and only if $\mon (X/Z)$ has a block of size $2$.
\end{proposition}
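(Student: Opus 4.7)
The plan is to leverage Proposition~\ref{prop:deck-centralizer}, which identifies the deck transformation group with the centralizer $C := \cent_{\sym(X_z)}(\mon(X/Z))$ via the restriction map $\varphi$. With this identification in hand, the statement reduces to a purely group-theoretic assertion: $C$ contains an order-$2$ element if and only if $\mon(X/Z)$ admits a block system with blocks of size $2$.

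For the forward direction, I would let $\Psi$ be a deck transformation of order $2$ and set $\sigma = \varphi(\Psi) \in C$. The key observation is that a nontrivial deck transformation has no fixed points on a generic fiber: if $\sigma$ fixed any $x_i \in X_z$, then the unique path-lifting argument used to establish Property 2 in the proof of Proposition~\ref{prop:deck-centralizer} would force $\Psi$ to be the identity. Hence $\sigma$ is a fixed-point-free involution whose orbits partition $X_z$ into pairs $\{x, \sigma(x)\}$. Since $\sigma$ commutes with every $\tau \in \mon(X/Z)$, we have $\tau(\{x,\sigma(x)\}) = \{\tau(x), \sigma(\tau(x))\}$, which is again an orbit of $\sigma$. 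Thus the $\sigma$-orbits form a block system of blocks of size $2$.

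For the backward direction, suppose $\mon(X/Z)$ preserves a partition $X_z = B_1 \sqcup \cdots \sqcup B_{d/2}$ with $|B_i|=2$. Define $\sigma \in \sym(X_z)$ to be the involution that swaps the two points of each block. I would verify that $\sigma$ centralizes $\mon(X/Z)$: for any $\tau \in \mon(X/Z)$ and any block $B_i = \{x,x'\}$, since $\tau$ is a bijection sending $B_i$ to some $B_j$, the two points $\tau(x), \tau(x')$ must be the two distinct elements of $B_j$, so $\sigma(\tau(x)) = \tau(x') = \tau(\sigma(x))$. Thus $\sigma \in C$, and by Proposition~\ref{prop:deck-centralizer}, $\sigma = \varphi(\Psi)$ for some deck transformation $\Psi$. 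Injectivity of $\varphi$ together with $\sigma^2 = \mathrm{id}$ forces $\Psi$ to have order $2$.

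The main obstacle—really the only subtle point—is the forward direction's claim that a nontrivial order-$2$ deck transformation acts without fixed points on the generic fiber, ruling out a block-destroying involution with both fixed points and transpositions. Everything else is routine book-keeping once the centralizer identification from Proposition~\ref{prop:deck-centralizer} is invoked.
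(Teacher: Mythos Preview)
Your proof is correct. The forward direction matches the paper's: both use that the orbits of an order-$2$ deck transformation on $X_z$ form a block system with blocks of size $2$ (the paper simply cites Proposition~\ref{prop:deckImpliesDecomposable}, while you spell out the fixed-point-free property explicitly).

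The backward direction, however, follows a genuinely different route. You stay entirely within the permutation-group framework: you build the block-swapping involution by hand, verify that it centralizes $\mon(X/Z)$, and then invoke the surjectivity part of Proposition~\ref{prop:deck-centralizer} to lift it to a deck transformation. The paper instead argues via field theory: a block of size $2$ yields, by Proposition~\ref{prop:decomposable_iff_imprimitive}, a factorization $X\dashrightarrow Y\dashrightarrow Z$ with $\deg(X/Y)=2$; since every degree-$2$ extension is Galois, the nontrivial element of $\Gal(\CC(X)/\CC(Y))$ is an automorphism of $\CC(X)$ fixing $\CC(Z)$, i.e.\ a deck transformation of order $2$. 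Your argument is more self-contained and combinatorial, relying only on the centralizer identification already established. The paper's argument is shorter and makes transparent \emph{why} size $2$ is special---it is exactly the case where the intermediate extension is automatically Galois---whereas your approach would not immediately generalize to larger block sizes without additional hypotheses.
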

\begin{proof} $\Rightarrow $ As in Proposition~\ref{prop:deckImpliesDecomposable}. $\Leftarrow$ Proposition~\ref{prop:decomposable_iff_imprimitive} gives a factorization such that $\CC (X) / \CC (Y)$ is a degree $2$ extension, and thus always Galois.
\end{proof}

\subsection{Numerically computing Galois/monodromy groups}\label{subsec:num_methods}
In this paper, our main interest is in minimal problems.
These typically give rise to branched covers of the form $\ratnoname{X}{\CC^m},$ at least up to birational equivalence.
We again emphasize that being a branched cover implies $\dim X = m$ and, for \emph{generic measurements} $z\in \CC^m,$ that the fiber $X_z$ is finite.
This is essentially the definition of a minimal problem used in~\cite{PLMP,PL1P}, where a problem is minimal if and only if its joint camera map is a branched cover.
In practice it is usually enough to consider the case where $X $ is a subvariety of $\CC^n \times \CC^m,$ and the map $X \to \CC^m$ is coordinate projection.
In this case, the ideal $\mathcal{I}_X$ of all polynomials in $\CC[x_1, \ldots , x_n, z_1, \ldots , z_m]$ that vanish on $X$ is prime.\\\\
However, one advantage of computing Galois/monodromy groups numerically is that we do not need to know all generators of $\mathcal{I}_X.$
All that is really needed are
\begin{itemize}
    \item[1)] the ability to sample a generic point $(x^*, z^*)\in X,$ and
    \item[2)] a set of $n$ equations vanishing on $X,$ \begin{equation}\label{eq:witnessSystem}
F(x;z) = F(x_1,\dots,x_n;z_1,\dots,z_m) = \left[\begin{array}{c} f_1(x_1,\dots,x_n;z_1,\dots,z_m) \\
\vdots \\ f_n(x_1,\dots,x_n;z_1,\dots,z_m)
\end{array}\right],
\end{equation}
such that the Jacobian $d_x \, F(x^*;z^*)$ is an invertible $n\times n$ matrix for generic $(x^*, z^*)\in X.$
We say that equations~\eqref{eq:witnessSystem} form a \deff{well-constrained system} for the branched cover $X \to \CC^m.$
\end{itemize}
In equation~\eqref{eq:witnessSystem}, $x_1,\ldots , x_n$ are usually called the \deff{variables} and $z_1, \ldots , z_m$ are usually called the \deff{parameters}.
For generic $(x^*, z^*) \in X$ and generic $z\in \CC^m,$ the path $\alpha : [0,1] \to \CC^m$ defined by the straight-line segment $\alpha (t) = (1-t)\cdot z^* + t\cdot z$ will have a lift $\widetilde{\alpha } (t)$ to $X$ based at $\widetilde{\alpha} (0) = (x^*, z^*).$ 
By genericity, the lifted path $\widetilde{\alpha}:[0,1] \to X$ will not intersect the subvariety of $X$ where $d_x \, F$ is singular for any $t\in [0,1]$---see~\cite[Lemma 7.12]{SW}.
This implies that $\widetilde{\alpha}$ can be numerically approximated by applying numerical continuation methods to the \deff{parameter homotopy} 
\begin{equation}\label{eq:ParameterHomotopy}
H(x,t) = F \left(x;  \alpha (t) \right) = 0,
\end{equation}
which connects known solutions of the \deff{start system} $H(x,0) = f(x;z^*) = 0$ to solutions of the \deff{target system}\footnote{This convention is chosen to agree with the notation of the previous section. 
We note that this is the opposite of the common convention of placing start systems at $t=1$ and target systems at $t=0$ which, although mathematically equivalent, is more natural from the numerical point of view.}  $H(x,1) = F(x;z) = 0.$
The solution curves $x(t)$ satisfying $H(x(t), t)=0$ and the initial condition $x(0) = x^*$ will stay on our irreducible variety $X$ with probability-one, and hence $\widetilde{\alpha } (t) = (x(t), \alpha (t)).$
Thus, although the variety $X$ need not be a complete intersection, appropriate use of a well-constrained system enables us to compute solutions on $X$ using the same number of equations as unknowns.
This fits into an established paradigm of numerical algebraic geometry where overdetermined parameterized polynomial systems may be solved by reduction to a well-contrained system (see also~\cite[Sec 6.4]{BHSW13},~\cite{HauensteinRegan}.)
In practice, our numerical approximations to $\widetilde{\alpha}(t)$ remain ``close'' to $X$ with some probability that depends on the conditioning and the implementation of the numerical methods.\\\\
By numerically continuing solutions along some path $\beta (t)$ from $z$ to $z^*$,
and then along some other path $\alpha (t)$ from
$z^*$ to $z$, the concatenated path $\gamma = \beta * \alpha $ is a loop based at $z$ which induces a monodromy permutation $\sigma_{\gamma } \in \mon (X/\CC^m ; z).$
This simple observation motivates numerous applications of monodromy in numerical algebraic geometry: computing the fibers $X_z$ (addressed in~\cite{MSpaper,MARTINDELCAMPO2017559}) computing the Galois/monodromy group (addressed in~\cite{NumGalois,GaloisSchubert}), and in additional applications ranging from numerical irreducible decomposition~\cite{NIDpaper} to kinematics~\cite{RealMonodromy}.
To compute $\mon (X/\CC^m)$ in this paper, we simply generate some number of loops $\gamma_1, \ldots , \gamma_k$ in $\CC^m,$ with $k$ ranging from $4$ (usually sufficient when $\mon (X/\CC^m)$ is full-symmetric) to as large as $50.$
This adds additional uncertainty to our numerical computations, since \emph{a priori} we only know that $\langle \sigma_{\gamma_1} , \ldots , \sigma_{\gamma_k} \rangle $ is a subgroup of $\mon (X/\CC^m; z).$
In principle, this additional uncertainty could be avoided by a more computation-heavy approach like the branch-point method in~\cite{NumGalois}.
Nevertheless, we feel reasonably confident in the Galois/monodromy group computations which we have report, which have been validated through repeated runs and analyzing decompositions in several of the imprimitive cases.

\section{Absolute pose problems}\label{sec:abspose}
In this section, we apply the mathematical framework of the previous section to absolute pose problems involving combinations of point/line features appearing in work of Ramalingam et al.~\cite{ramalingam}
Absolute camera pose estimation is one of the main problems of computer vision~\cite{Ameller02camerapose,RANSAC,DBLP:journals/ijcv/HaralickLON94,DBLP:journals/ijcv/LepetitMF09,DBLP:journals/pami/QuanL99,DBLP:conf/issac/ReidTZ03,DBLP:conf/iccv/Triggs99,DBLP:journals/jmiv/WuH06}.
Although the problems considered here are of low degree, computing the Galois/monodromy groups yields new insights which might be applied to building better solvers for these problems.

We begin formulating these problems in the language of branched covers.
Our general task is to determine a calibrated camera matrix $\cam{\R}{\tran}$ from correspondence data between the scene and images.
We let $p$ and $l$ be the numbers of point-point and line-line correspondences, respectively, between 3D and 2D. 
The total space of our branched cover is
\[
X_{p,l} = \left( \PP^3 \right)^p \times \left( \GG_{1,3} \right)^l 
\times \SECC (3)
\]
where $\GG_{1,3}$ denotes the Grassmannian of lines in $\PP^3.$
The base space equals
\[
Z_{p, l}  = \left( \PP^3 \right)^p \times \left( \GG_{1,3} \right)^l \times \left( \PP^2 \right)^p \times \left( \GG_{1,2} \right)^l,
\]
where $\GG_{1,2}$ denotes the Grassmannian of lines in $\PP^2$ and $\rat{f_{p,l}}{X_{p,l}}{Z_{p.l}}$ is the map that ``takes pictures": 
\begin{align*}
\left(
X_1, \, \ldots , \, X_p, \, \overline{L_1 \, L_1'}, \, \ldots , \, \overline{L_l \, L_l '}, \, \cam{\R}{\tran} \right) \mapsto \\
\bigg(
X_1, \, \ldots , \, X_p, \, \overline{L_1 \, L_1'}, \, \ldots , \, \overline{L_l \, L_l '}, \, 
\cam{\R}{\tran}\, X_1, \, \ldots , \, \cam{\R}{\tran}\, X_p, \\
\overline{\cam{\R}{\tran}\, L_1 \, \, \cam{\R}{\tran}\, L_1'}, \, \ldots , \, \overline{\cam{\R}{\tran}\,  L_l \, \, \cam{\R}{\tran}\, L_l '}
\bigg)
\end{align*}
(here $\overline{L \, L'}$ is the line spanned by $L$ and $L'.$)
Counting dimensions gives $\dim X_{p,l} = 3 (p+l) + 6$ and $\dim Z_{p,l} = 5 (p+l).$
Equating the two, we see that the only possibilities are $(p,l)=(3,0),$ $(2,1),$ $(1,2), $   $(0,3).$
The first case corresponds to the P3P problem.

\begin{result}
\label{res:abs-pose}
The full list of Galois/monodromy groups of branched covers $f_{p,l}$ is as follows:
\begin{align*}
\mon (X_{3,0}/Z_{3,0}) &\cong S_2 \wr S_4 \cap A_8 \hookrightarrow S_8\\
\mon (X_{2,1}/Z_{2,1}) &\cong S_2 \wr S_2 \cap A_4 \cong C_2 \times C_2 \hookrightarrow S_4\\
\mon (X_{1,2}/Z_{1,2}) &\cong S_2 \wr S_4 \cap A_8 \hookrightarrow S_8\\
\mon (X_{0,3}/Z_{0,3}) &\cong S_8.
\end{align*}
\end{result}
We interpret Result~\ref{res:abs-pose} in two separate subsections, corresponding to the ``unmixed cases" $(p,l) \in \{ (3,0), \, (0,3) \}$ and the more interesting ``mixed cases" $(p,l) \in \{ (2,1), \, (1,2) \}.$
We note the respective degrees $8, \, 4, \, 8, \, 8$ agree with those reported in~\cite{ramalingam}, in which these problems were formulated using different systems of equations.
The systems of equations defining the parameter homotopies used for Result~\ref{res:abs-pose} were constructed as follows:
\begin{itemize}
\item Points in the world are represented by $4\times 1$ matrices $X_1, \ldots , X_p.$
\item Points in the image are represented by $3\times 1$ matrices $x_1, \ldots , x_p.$
\item Lines in the world are represented as kernels of $2\times 4 $ matrices $[\N_1  \mid \N_1 ']^\top, \ldots , [\N_l \mid  \N_l']^\top.$
\item Lines in the image are represented as kernels of $1\times 3$ matrices $\nn_1^\top , \ldots \nn_l^\top.$
\item We enforce rank constraints by the vanishing of maximal minors of certain matrices:
\begin{itemize}
    \item point-to-point: $\rank \Big( \cam{\R}{\tran} \, X_i \mid x_i \Big) \le 1$ for $i=1,\ldots , p$
    \item line-to-line: $\rank \Big( \N_i \mid \N_i ' \mid \cam{\R}{\tran}^\top \nn_i \Big) \le 2$ for $i=1,\ldots , l$
\end{itemize}
\item Any square subsystem of these maximal minors whose Jacobian has full rank gives a well-constrained system in the sense of Section~\ref{subsec:num_methods} (the variables being $\cam{\R}{\tran},$ and the parameters being all points and lines.)
Although the square subsystem may have excess solutions, they are not in the same orbit as the geometrically relevant solutions under the monodromy action.
Alternatively, we can get a well-constrained system by randomization as in~\cite[Sec 6.4]{BHSW13}.
The monodromy group does not depend on the choice of well-constrained system.
\end{itemize}

\subsection{The unmixed cases: \texorpdfstring{$(p,l)=(3,0), \, (0, 3)$}{(p,l)=(3,0),  (0, 3)}}

The case $(p,l) = (3,0)$ reduces to solving the P3P problem as formulated in Equations~\eqref{eq:p3p}.
The literature on this problem is vast, and the earliest work~\cite{Grunert-1841} pre-dates the field of computer vision by more than a century.
The degree of this problem is $8$ and the Galois/monodromy group is a subgroup of $S_2 \wr S_4$ due to the sign symmetry.
In the terminology of Brysiewicz et al.~\cite{Brysiewicz}, Equations~\eqref{eq:p3p} are a lacunary polynomial system whose monomial supports span a proper sublattice of $\ZZ^3$ with finite index.
In the setting of that paper, we would consider the family of all systems with the same monomial supports as in~\eqref{eq:p3p}
\begin{equation}
\label{eq:fullsupport}
\begin{split}
h_{1,2} = A x_1^2 + B x_2^2 + C x_1 x_2 + D\\
h_{1,3} = E x_1^2 + F x_3^2 + G x_1 x_3 + H\\
h_{2,3} = I x_2^2 + J x_3^2 + K x_2 x_3 + L 
\end{split}
\end{equation}
This gives a branched cover $X_h \to \CC^{12}$ where $X_h = V(h_{1,2}, h_{1,3}, h_{2,3}) \subset \CC^3 \times \CC^{12}.$
On the other hand, for P3P the natural branched cover is $X_f \to \CC^6,$ where $X_f\subset \CC^{3} \times \CC^{6}.$
We find numerically that $\mon (X_h / \CC^{12})$ is the full wreath product $S_2 \wr S_4,$ whereas our numerical experiments suggest that the Galois/monodromy group for P3P is the \emph{proper subgroup} $S_2 \wr S_4 \cap A_8.$ 

To certify the result of our numerical monodromy computation, we can compute the Galois group for P3P using symbolic computation.
Consider $I = \langle f_{1,2}, f_{1,3}, f_{2,3} \rangle $ as an ideal in a polynomial ring $\mathbb{F} [x_1,x_2,x_3]$ whose coefficient field is $\mathbb{F} = \CC (Z) = \CC (\vec{c}, \vec{d}).$  
The dimension and degree of $I$ are $0$ and $8.$
We can compute a lexicographic Gr\"{o}bner basis for $I$ with $x_1 > x_2 > x_3$ in a matter of seconds using the FGLM algorithm~\cite{faugere1993efficient}, implemented for Macaulay2~\cite{M2} in the package \texttt{FGLM}~\cite{FGLMSource}.
The Gr\"{o}bner basis $G = \{ g_1, g_2, g_3 \}$ has the form predicted by the Shape lemma:
\begin{align*}
g_1 (x_1,x_2,x_3) &= x_1 + r_1 (\vec{c}, \vec{d} ) \, x_3 \\
g_2 (x_2,x_3) &= x_2 + r_2(\vec{c}, \vec{d} ) \, x_3\\
g_3 (x_3) &= x_3^8 + A (\vec{c}, \vec{d} ) \, x_3^6 + B (\vec{c}, \vec{d} ) \, x_3^4 + C (\vec{c}, \vec{d} ) \, x_3^2 + D (\vec{c}, \vec{d} ), 
\end{align*}
for particular rational functions $r_1,r_2,A,B,C,D \in \mathbb{F}.$
We see that $x_3$ is a primitive element for the extension $\CC (X) / \CC (Z).$
To verify that $\mon (X/Z) \cong \gal (X/Z)$ is contained in $A_8,$ it suffices to show that the discriminant of $g_3$ is square.
For arbitrary coefficients $(A,B,C,D),$ the discriminant of $x_3^8 + A  \, x_3^6 + B  \, x_3^4 + C \, x_3^2 + D$ is the product of $D $ and a square. 
For P3P, $D(\vec{c}, \vec{d} )$ is also a square.

Factorizations of P3P are also classical: from Equations~\eqref{eq:p3p}, we have
\begin{equation}
\label{eq:p3p-factor}
\begin{split}
y_1 (1+ y_2^2 - c_{1,2} y_2 ) - d_{1,2}^2=0\\
y_1 (1+ y_3^2 - c_{1,3} y_3 ) - d_{1,3}^2=0\\
y_1 (y_2^2 + y_3^2 - c_{2,3} y_2 y_3) - d_{2,3}^2 = 0
\end{split}
\end{equation}
where $y_1 = x_1^2, \, y_2 = x_2 / x_1, \, y_3 = x_3/x_1$ are separating invariants~\cite{Kemper} for the action of the deck transformation group.
We see that even when $X \to Z$ is regular in the definition of a factorization~\eqref{eq:factorization-diagram}, the maps $\ratnoname{X}{Y}$ and $\ratnoname{Y}{Z}$ need not be. 

On the other hand, the branched cover $\ratnoname{X_{0,3}}{Z_{0,3}}$ for absolute pose from $3$ lines is indecomposable, since its Galois/monodromy group $S_8$ acts primitively on the set of $8$ solutions.
Thus, any algebraic algorithm for solving this problem must be capable of computing the roots of a polynomial of degree $8$ or higher.

\subsection{The mixed cases: \texorpdfstring{$(p,l)=(2,1), \, (1, 2)$}{(p,l)=(2,1), (1,2)}}

Proposition~\ref{prop:deck-centralizer}
shows that each of the mixed cases has a nontrivial deck transformation group:
we have that $\Aut (X_{2,1} / Z_{2,1} ) \cong C_2 \times C_2$ and $\Aut (X_{1,2} / Z_{1,2} ) \cong C_2.$
Using the rank constraints described above, we were able to observe numerically that solutions in the same block for both of these mixed cases differed by a reflection. 
These deck transformations take on a particularly simple form after changing coordinates as in~\cite{ramalingam}.

For the case $(p,l) = (2,1),$ the formulation~\cite[Equations 4,5]{ramalingam} makes use of a clever choice of reference frames to get equations
\begin{equation}
\label{eq:ramalingam-alt}
\begin{split}
A \, \bs{X} - b = 0 \\
R_{1,1}^2 + R_{2,1}^2 + R_{3,1}^2 - 1 = 0\\
R_{2,1}^2 + R_{2,2}^2 + R_{2,3}^2 - 1 = 0
\end{split}    
\end{equation}
where $A$ and $b$ are $6\times 8$ and $8\times 1$ matrices depending on the given data, and 
\[
\bs{X} = [R_{1,1}, R_{2,1}, R_{3,1}, R_{2,2}, R_{2,3}, t_1, t_2, t_3]^\top
\]
is a vector of indeterminates.
Using \texttt{FGLM} as in the previous section, we discover new constraints
\begin{align*}
R_{3,1}^2 + \psi_1 (A,b) = 0 \\
t_3^2 + 2 t_3 + \psi_2 (A,b) = 0, 
\end{align*}
for particular rational functions $\psi_1, \psi_2$ in the data,
which did not appear in~\eqref{eq:ramalingam-alt} originally.
Formulas for the deck transformations of this Galois cover follow by way of the basic Example~\ref{ex:deck-quad}.
The remaining constraints output by \texttt{FGLM} are, as expected, of the form
\begin{align*}
R_{i,j} + \ell_{i,j} (R_{3,1})=0\\
t_j + \ell_j (t_3) =0
\end{align*}
for linear forms $\ell_j, \ell_{i,j}$ over the coefficient field $\QQ (A,b).$
For this very special problem, the Gr\"{o}bner basis elements are surprisingly compact. 
This suggests, as an alternative to the solution proposed in~\cite{ramalingam}, that we may solve for the rotation and translation independently. 

Likewise, for the $(p,l) = (1,2)$ case, using the similar formulation of~\cite[Equations 7,8]{ramalingam}, we discover the following symmetry in the solutions ($\ee_3\in \RR^3$ is the third standard basis vector):
\[
(\vec{R}, \, \tran) \mapsto (-\vec{R}, -\tran - 2 \ee_3). 
\]
We note that in this formulation, $\vec{R}$ contains only the first two rows of the unknown rotation matrix.
In hindsight, this symmetry is quite easy to verify.
However, we stress that computing the Galois/monodromy group was what led us to discover it.

\section{Relative pose problems}
\label{sec:rel-pose}
Our interest in Galois/monodromy groups of minimal problems began when we computed, using Bertini~\cite{Bertini}, that the Galois/monodromy group of the five-point problem was $S_2 \wr S_{10} \cap A_{20}.$
Much like P3P, we had initially expected the full wreath product.
Since then, we have computed Galois/monodromy groups of many minimal problems using Bertini and the Macaulay2 package \texttt{MonodromySolver}~\cite{MonodromySolver}.
Overall, we agree with the assessment of Esterov and Lang that Galois/monodromy groups of structured polynomial systems are \emph{``unexpectedly rich}''~\cite{esterov2018sparse}.

Many of the problems we considered appeared previously in~\cite[Table 1]{PLMP}.
In that work, branched covers were represented by pictograms. 
For instance, the five-point problem was denoted by $\fivepointproblem .$
The problems $\cleveland$ and $\chicago$ were introduced in~\cite{Joe,Ricardo}, respectively.
Our computations that $\mon \left(\cleveland \right) \cong S_{216}$ and $\mon \left(\chicago \right)\cong S_{312}$ show that the homotopy solvers developed in~\cite{Ricardo} are \emph{optimal} in the sense of tracking the fewest paths possible. 
The problem $\partialminimal$ appearing in~\cite{PL1P} can be thought of as P3P fibered over the five-point problem.
Unlike the majority of problems studied here,
this composite minimal problem has an intermediate field $\CC (Z) \subsetneq \CC (Y) \subsetneq \CC (X)$ which is not the fixed field of some subgroup of $\Aut (X/Z).$

\begin{result}\label{res:plmp}
Among all minimal problems of degree $< 1, 000$ appearing in~\cite[Table 1]{PLMP}, all have either an imprimitive or full symmetric Galois/monodromy group.
The imprimitive cases are:
\begin{align*}
\mon \left(\twoviewhomography \right) &\cong (C_2)^2 \rtimes (S_2 \wr S_3 \cap A_6) \hookrightarrow S_{12} \\
\mon \left( \fivepointcollinear \right) &\cong S_2 \wr S_{8} \cap A_{16} \hookrightarrow S_{16}\\
\mon \left(\fivepointproblem\right) &\cong S_2 \wr S_{10} \cap A_{20} \hookrightarrow S_{20}\\
\mon \left( \threeviewhomography \right) &\cong S_2 \wr \left( S_2 \wr S_{16} \cap A_{32} \right) \cap A_{64} \hookrightarrow S_{64}\\
\mon \left( \hedgehog \right) &\cong \left(C_2\right)^4  \rtimes \left( \left(C_2\right)^4 \rtimes \left( S_2 \wr (S_2 \wr S_4) \right) \right) \hookrightarrow S_{64}\\
\mon \left( \fourcam \right) &\cong \left(C_2\right)^2 \rtimes \left(
C_2^2 \rtimes \left(
S_2 \wr \left(
S_2 \wr S_2 \cap A_4
\right)
\cap A_8
\right)
\right)
\hookrightarrow S_{32}. 
\end{align*}
\end{result}
For the sake of uniformity, we have used the semidirect product $\rtimes $ to indicate subgroups of an appropriate wreath product.
Thus, for instance, for $\mon \left( \fourcam \right),$ the outermost $(C_2)^2 $ should be regarded as a subgroup of $(S_2)^{16},$ and the innermost as a subgroup of $(S_2)^{8}.$
Much to our surprise, the group $\mon \left( \fourcam \right)$ turns out to be solvable.

Clearly there are many minimal problems waiting to be decomposed.
With increasingly efficient and user-friendly homotopy continuation software like Bertini, \texttt{HomotopyContinuation.jl}~\cite{HCJL}, and various numerical packages for Macaulay2 (summarized in~\cite{leykin2018homotopy}), we see no obstacles to computing even more Galois/monodromy groups of interest to computer vision in the future.

In the remainder of this section, we give particular attention to three problems appearing in Result~\ref{res:plmp}.
The first is the five-point problem $\fivepointproblem .$ 
The second is the five-point problem for data which lie in a ``V"-shape $\twoviewhomography .$
Decomposing the associated branched cover reveals the classical planar calibrated homography problem.
The third, denoted $\threeviewhomography,$ is a special case of the notorious four-points-in-three-views problem~\cite{NisterSchaf} in which three of the corresponding points are collinear.

\subsection{Five-point relative pose}
\label{subsec:5pp}
We now return to the five-point problem, retaining the notation from Examples~\ref{ex:5pp} and~\ref{ex:resolve-twisted}.
It is well-known fact that the branched cover $X\to Z$ is decomposable.
The intermediate variety is
\[
Y = \{ 
\left( \E , \, \left (\xx_1, \, \ldots , \, \yy_5 \right) \right) \in \Ess \times Z \mid \yy_i^\top \E \, \xx_i = 0 ,\, i=1,\, \ldots , \, 5 
\} .
\]
The equations defining $Y$ give the formulation of the five-point problem studied in several seminal works~\cite{Demazure,Kruppa,FaugerasMaybank}, and used in state of the art five-point solvers such as Nist\'{e}r's~\cite{Nister}.
The fact that $Y\to Z$ is a generically $10$-$1$ map is closely related to the fact that $\Ess$ is a projective variety of dimension $5$ and degree $10.$
Although the individual linear equations $\yy_i^\top \E \, \xx_i =0$ are special, considered together they determine a dominant rational map $\ratnoname{Z}{\GG_{3,8}}.$ 
This fact follows from the trisecant lemma of classical algebraic geometry (see~\cite[p.~134]{Mumford} for one statement of this result, or~\cite[Sec.~5.2.3]{Maybank} for an alternative explanation of this fact.)

In formulating the five-point problem, it is possible to normalize the translations and depths in various ways.
For instance, we might use the normalization $\norm{\tran}^2 = 1,$ giving rise to a branched cover $W \to Z$ with $W \subset \SOCC (3) \times \CC^{13} \times Z .$
This branched cover decomposes as
\[
W \dashrightarrow X \rightarrow Y \rightarrow Z.
\]
The four elements in the generic fiber of $W\to Y$ are illustrated in~\cite[Figure 9.12]{HZ-2003}.
The various Galois/monodromy groups for the five-point problem are summarized in Result~\ref{res:5pp}.
The result $\mon (Y/Z) \cong S_{10}$ gives numerical confirmation of the results of~\cite{NisterHartley}, which imply that $\Gal \left( \galclo{\QQ (Y)} / \QQ (Z) \right)\cong S_{10}$ by appeal to Hilbert's irreducibility theorem~\cite[Proposition 3.3.5]{Serre}. 
\begin{result}
\label{res:5pp}
The Galois/monodromy groups of the five-point problem are as follows:
\begin{align*}
\mon (W/Z) &\cong \left(C_2\right)^9 \rtimes (S_2 \wr S_{10}) \hookrightarrow S_{40} \\ 
\mon (X/Z) &\cong S_2 \wr S_{10} \cap A_{20} \hookrightarrow S_{20}\\ 
\mon (Y/Z) &\cong S_{10},
\end{align*}
where $C_2^9$ is the subgroup of  $(\tau_1, \ldots , \tau_{20}) \in (S_2)^{20}$ generated by $\tau_{2i-1} \tau_{2i}$ for $i=1,\ldots , 20.$
\end{result}

\subsection{Two-view homography}
\label{subsec:2viewhomography}
We now move on to a minimal problem for planar scenes.
We use similar notation as in the five-point problem, with constraints
\begin{equation} \label{eq:4p2v}
    \begin{split}
        \R^\top\R = \I, \quad \det \R = 1,
        \\
        \beta_i\yy_i = \R\alpha_i\xx_i + \mathbf{t}, \;\; \alpha_i, \beta_i \neq 0, \quad i = 1,\dots,4,
        \\
        \det\left(\begin{bmatrix} \alpha_1\xx_1 & \alpha_2\xx_2 & \alpha_3\xx_3 & \alpha_4\xx_4 \\ 1 & 1 & 1 & 1 \end{bmatrix}\right) = 0.
    \end{split}
\end{equation}
Our incidence variety $X$ is given by 
\[ X \subseteq \SOCC (3) \times \PP_\CC^{10} \times \underbrace{\left(\CC^2 \times \{1 \}\right)^4 \times \left(\CC^2 \times \{1 \}\right)^4}_{Z} \]
such that Equations~\ref{eq:4p2v} hold for all $\left(\R, (\mathbf{t}, \alpha_1, \ldots , \alpha_4, \beta_1, \ldots , \beta_4), (\xx_1,\ldots , \xx_4, \yy_1 , \ldots , \yy_4) \right) \in X.$
Projection of $X$ onto $Z$ defines a branched cover of degree $12.$ 
This branched cover is birationally equivalent to the joint camera map of the problem $\twoviewhomography$ from~\cite{PLMP}, since the fifth point on both lines in each image is generically determined from the other four points.
Result~\ref{res:plmp} tells us the Galois/monodromy group is $C_2 \times C_2 \rtimes (S_2 \wr S_3 \cap A_6).$
The GAP command \texttt{MinimalGeneratingSet} shows that this group is minimally generated by two permutations: in cycle notation,
\begin{equation}
\label{eq:homography-permutations}
\mon (X/Z) \cong \Big\langle
(1 \,\,\, 2) (3 \,\,\, 4) (5 \,\,\, 12 \,\,\, 8 \,\,\, 9) (6 \,\,\, 11 \,\,\, 7 \,\,\, 10),\,
(1 \,\,\, 11 \,\,\, 5) (2 \,\,\, 10 \,\,\, 8) (3 \,\,\, 9 \,\,\, 7) (4 \,\,\, 12 \,\,\, 6)
\Big\rangle. \end{equation}

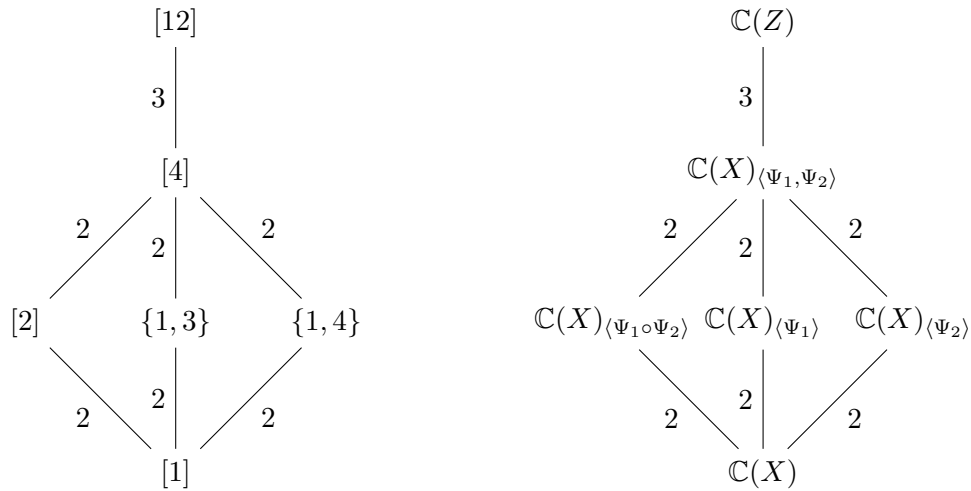
\begin{figure}[H]
\centering
  \begin{tikzpicture}
  \node (1) at (0,0) {$[12]$};
  \node (2) at (0,-2) {$[4]$};
  \node (3) at (-2,-4) {$[2]$};
  \node (4) at (0,-4) {$\{ 1, 3 \}$};
  \node (5) at (2,-4) {$\{ 1, 4 \}$};
  \node (6) at (0,-6) {$[1]$};
  
  \path[-]
  (2) edge node[left] {$3$} (1)
  (3) edge node[above left] {$2$} (2)
  (4) edge node[left] {$2$} (2)
  (5) edge node[above right] {$2$} (2)
  (6) edge node[below left] {$2$} (3)
  (6) edge node[left] {$2$} (4)
  (6) edge node[below right] {$2$} (5);
  \end{tikzpicture}
  \phantom{ffffffffffffffff}
  \begin{tikzpicture}
  \node (1) at (0,0) {$\CC(Z)$};
  \node (2) at (0,-2) {$\CC(X)_{\langle\Psi_1,\Psi_2\rangle}$};
  \node (3) at (-2,-4) {$\CC(X)_{\langle\Psi_1\circ\Psi_2\rangle}$};
  \node (4) at (0,-4) {$\CC(X)_{\langle\Psi_1\rangle}$};
  \node (5) at (2,-4) {$\CC(X)_{\langle\Psi_2\rangle}$};
  \node (6) at (0,-6) {$\CC(X)$};
  
  \path[-]
  (1) edge node[left] {$3$} (2)
  (2) edge node[above left] {$2$} (3)
  (2) edge node[left] {$2$} (4)
  (2) edge node[above right] {$2$} (5)
  (3) edge node[below left] {$2$} (6)
  (4) edge node[left] {$2$} (6)
  (5) edge node[below right] {$2$} (6);
\end{tikzpicture}
\caption{Correspondence between block systems (left) and intermediate fields (right) for the calibrated homography problem.
The notation $K_H$ means the intermediate field of an extension $K/F$ fixed elementwise by a subgroup $H \le \Aut(K/F).$
}
\label{fig:blocks_fields}
\end{figure}

The lattice of block systems is depicted on the left in Figure~\ref{fig:blocks_fields}.
The vertex labels correspond to stabilizer subgroups of $\mon (X/Z)$, and the edges are labeled by the degrees of maps appearing in some decomposition of the form in Equation~\eqref{eq:decomposition}.
To the right is the inverted lattice of intermediate fields.
Like the majority of examples in this paper, $\CC (X) / \CC (Z)$ is not a Galois extension.

Before we determine a decomposition, we first describe the group of deck transformations.
The centralizer in $S_{12}$ is
\[ 
\Big\langle 
(1 \,\,\, 3) (2 \,\,\, 4) (5 \,\,\, 7) (6 \,\,\, 8) (9 \,\,\, 11) (10 \,\,\, 12), \, (1 \,\,\, 4) (2 \,\,\, 3) (5 \,\,\, 8) (6 \,\,\, 7) (9 \,\,\, 12) (10 \,\,\, 11)
 \Big\rangle \cong C_2 \times C_2. \]
The deck transformation corresponding to the first generator is the twisted pair map $\Psi_1,$ defined just as in Equation~\eqref{eq:twistedPairMap}.
The second is a reflection-rotation symmetry $\Psi_2$ depicted in Figure~\ref{fig:4p2v_reflection}. 
To get a formula for $\Psi_2,$ it is convenient to work with the equation of the unknown plane:
\begin{equation}
\label{eq:plane}
\langle \nn , \bs{X} \rangle = d.
\end{equation}
Note that $\nn$ and $d$ depend rationally on the data.
The formula for $\Psi_2$ is given by
\begin{equation}
\begin{split}
\Psi_2 (\R) &= \R \, \left(
2\, \displaystyle\frac{\mathbf{n} \mathbf{n}^\top}{\mathbf{n}^\top \mathbf{n}}
- \I
\right)    \\
\Psi_2 (\tran) &= 
- \tran - \frac{2 d}{\nn^\top\nn} \R \nn \\
\Psi_2 (\alpha_i ) &= \alpha_i \\
\Psi_2 (\beta_i ) &= -\beta_i \\
\Psi_2 (\xx_1, \ldots, \xx_4, \yy_1, \ldots , \yy_4) &= (\xx_1, \ldots, \xx_4, \yy_1, \ldots , \yy_4).
\end{split}    
\end{equation}
To better understand the effect of $\Psi_2$ on $\tran,$ let $\bs{X}$ be any point on the scene plane and calculate
\begin{align}
 -\tran - \frac{2d}{\nn^\top\nn}\, \R  \, \nn &=  -\tran - \R \bs{X} - \R \left(2\frac{\mathbf{n}\mathbf{n}^\top}{\mathbf{n}^\top\mathbf{n}} - \I\right) \, \bs{X} \label{eq:deckX} \\
&= - \R\left( 2\frac{\mathbf{n}\mathbf{n}^\top}{\mathbf{n}^\top\mathbf{n}} - \I \right)\bigg(\Big( \I -2\frac{\mathbf{n}\mathbf{n}^\top}{\mathbf{n}^\top\mathbf{n}}\Big)\left(-\R^\top\tran - \bs{X}\right) + \bs{X}\bigg) \\
&= - \Psi_2 (\R) \, \bigg(\Big( \I -2\frac{\mathbf{n}\mathbf{n}^\top}{\mathbf{n}^\top\mathbf{n}}\Big)\left(-\R^\top\tran - \bs{X}\right) + \bs{X} \bigg) 
\end{align}
This may be understood as follows: we take $- \R^\top \tran ,$ which is the center of the second camera  expressed in the frame of the first camera (cf.\ Eq.~\ref{eq:4p2v}), then reflect this vector through the plane and transform it back to the vector representing the center of the first camera expressed in the frame of the reflected second camera (by multiplying by $-\Psi_2(\R)$).
\begin{figure}[H]
\def\svgwidth{\columnwidth}
\import{./Figs/4p2v/}{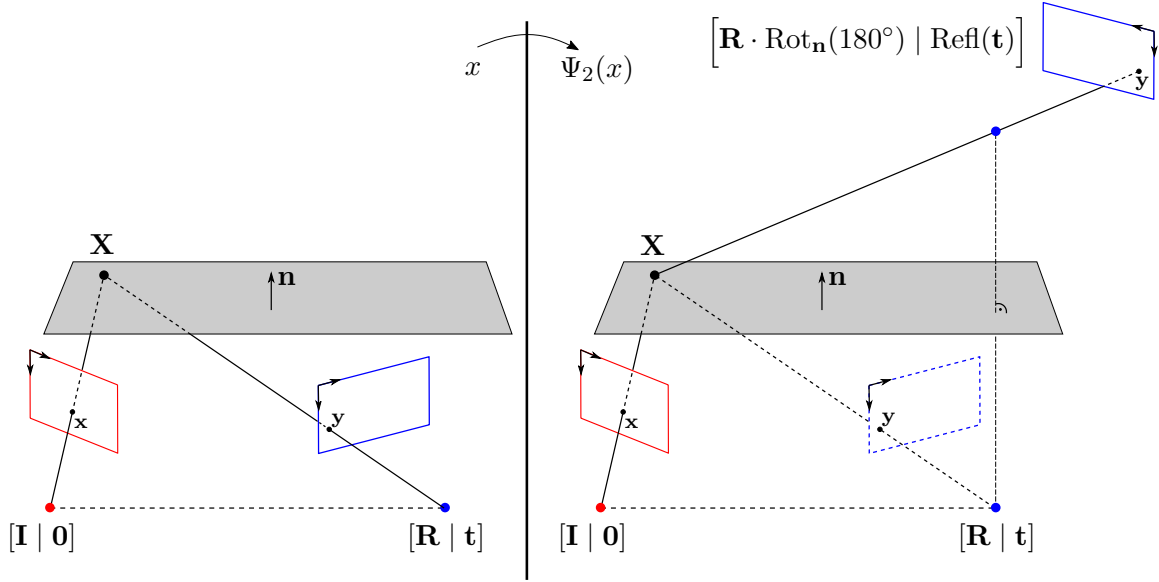}
\caption{Reflection-rotation symmetry for Equations~\ref{eq:4p2v}}
\label{fig:4p2v_reflection}
\end{figure}
\begin{proposition} \label{prop:4p2v_deck}
$\Psi_1$ and $\Psi_2$ generate the deck transformation group for the planar calibrated homography problem $X\to Z$ defined by Equations~\ref{eq:4p2v}.
The corresponding permutations which centralize $\mon (X/Z)$ are as follows:
\begin{center}
\begin{tabular}{ccc}
$\Psi_1 \circ \Psi_2$ & $\leftrightarrow$ & $(1 \,\,\, 2) (3 \,\,\, 4) (5 \,\,\, 6) (7 \,\,\, 8) (9 \,\,\, 10) (11 \,\,\, 12) $\\
$\Psi_1$ & $\leftrightarrow $ & $(1 \,\,\, 3) (2 \,\,\, 4) (5 \,\,\, 7) (6 \,\,\, 8) (9 \,\,\, 11) (10 \,\,\, 12)$ \\
$\Psi_2$ & $\leftrightarrow $ & $(1 \,\,\, 4) (2 \,\,\, 3) (5 \,\,\, 8) (6 \,\,\, 7) (9 \,\,\, 12) (10 \,\,\, 11)$
\end{tabular}
\end{center}
These correspond to the maximal chains in the lattice of block systems (see Figure~\ref{fig:blocks_fields}).
\end{proposition}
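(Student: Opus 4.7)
The plan is to split the statement into three parts: (i) $\Psi_1, \Psi_2$ are deck transformations, (ii) they generate the full deck group and match the listed permutations, and (iii) the three order-$2$ subgroups they define correspond to the three middle block systems in Figure~\ref{fig:blocks_fields}.

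For part (i), the map $\Psi_1$ is the twisted pair, whose deck-transformation property was already established in Example~\ref{ex:5pp} and whose formula depends only on $\R$ and $\tran$; it therefore remains a deck transformation after adding the coplanarity constraint, since that constraint involves only $\alpha_i \xx_i$ which $\Psi_1$ leaves invariant. For $\Psi_2$, I would first observe that $H := 2\,\nn \nn^\top/\nn^\top \nn - \I$ has eigenvalues $1,-1,-1$, hence $\det H = 1$ and $H^\top H = \I$, so $\Psi_2(\R) = \R H \in \SOCC(3)$. Using the plane equation $\nn^\top \bs{X}_i = d$ satisfied by $\bs{X}_i = \alpha_i\xx_i$, a direct computation analogous to Equation~\eqref{eq:deckX} yields
\[
\Psi_2(\R)\,\alpha_i\xx_i + \Psi_2(\tran) \;=\; \R H \bs{X}_i - \tran - \tfrac{2d}{\nn^\top\nn}\R\nn \;=\; -\R\bs{X}_i - \tran \;=\; -\beta_i\yy_i,
\]
which matches $\Psi_2(\beta_i)\yy_i = -\beta_i \yy_i$. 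The coplanarity equation in~\eqref{eq:4p2v} involves only the $\alpha_i\xx_i$, which $\Psi_2$ fixes. Hence $f \circ \Psi_2 = f$. A short check that $\Psi_1$ and $\Psi_2$ commute (their formulas act on essentially disjoint blocks of variables) and that each squares to the identity shows $\langle \Psi_1, \Psi_2 \rangle \cong C_2 \times C_2$.

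For part (ii), apply Proposition~\ref{prop:deck-centralizer}: the deck transformation group is the centralizer of $\mon(X/Z)$ in $S_{12}$. Using the generators for $\mon(X/Z)$ exhibited in~\eqref{eq:homography-permutations}, a GAP computation (as already referenced above the statement) shows this centralizer is the $C_2 \times C_2$ with the three listed involutions. Since $\langle \Psi_1, \Psi_2 \rangle$ is a subgroup of order $4$ inside a group of order $4$, they generate the full deck transformation group. To pin down the bijection between deck transformations and listed permutations, I would fix a generic parameter point, track the $12$ solutions numerically (using the same homotopies underlying Result~\ref{res:plmp}), and evaluate each $\Psi_i$ on the resulting fiber to read off its cycle structure. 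The composition $\Psi_1 \circ \Psi_2$ inherits its permutation from the product of those of $\Psi_1$ and $\Psi_2$.

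For part (iii), apply Proposition~\ref{prop:bir_order_2}: each nontrivial involution in $\Aut(X/Z)$ produces a block system with blocks of size $2$, given by the orbits of the cyclic subgroup it generates. The three nontrivial cyclic subgroups $\langle \Psi_1 \rangle, \langle \Psi_2 \rangle, \langle \Psi_1 \circ \Psi_2 \rangle$ therefore supply the three middle block systems in the lattice of Figure~\ref{fig:blocks_fields}, and each defines a maximal chain of length $2$ together with the top/bottom elements. The degrees along these chains are $3$ (the quotient by the $C_2\times C_2$-action, corresponding to the top edge) and $2$ (the quotient by a single involution), matching the labels in the figure.

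The main obstacle is the verification for $\Psi_2$: one has to recognize that $H$ is a half-turn rather than a reflection (so $\det H = +1$) and then carry out the cancellation using $\nn^\top \bs{X}_i = d$. Once that computation is in place, the rest is bookkeeping with Propositions~\ref{prop:deck-centralizer} and~\ref{prop:bir_order_2}.
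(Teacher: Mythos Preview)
Your treatment of $\Psi_2$ and of parts (ii)--(iii) matches the paper's approach and is fine. There is, however, a genuine gap in your verification of $\Psi_1$.

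You write that the coplanarity constraint ``involves only $\alpha_i\xx_i$ which $\Psi_1$ leaves invariant.'' This is false: the twisted pair does \emph{not} fix the depths. From Equation~\eqref{eq:twisted-pair},
\[
\Psi_1(\alpha_i) \;=\; \frac{-\alpha_i\,\norm{\tran}^2}{\norm{\tran}^2 + 2\langle \R^\top\tran,\,\alpha_i\xx_i\rangle},
\]
so $\Psi_1(\alpha_i)\xx_i \neq \alpha_i\xx_i$ in general. Consequently, the fact that $\Psi_1$ is a deck transformation for the five-point problem does \emph{not} automatically carry over to the planar problem~\eqref{eq:4p2v}: one must still check that the coplanarity determinant vanishes after the substitution $\alpha_i \mapsto \Psi_1(\alpha_i)$. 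This is precisely the computation the paper performs. Writing $\mathbf{m} = \tfrac{2}{\tran^\top\tran}\R^\top\tran$, one has $\Psi_1(\alpha_i) = -\alpha_i/(1+\mathbf{m}^\top\alpha_i\xx_i)$, and the key point is that the transformed $4\times 4$ matrix can be row-reduced back to the original one up to column scalings, so its determinant remains zero. Without this step, you have not shown $\Psi_1$ preserves $X$.

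A smaller remark: your claim that $\Psi_1$ and $\Psi_2$ ``act on essentially disjoint blocks of variables'' is also not quite right---both act nontrivially on $\R$ and on the $\beta_i$. Commutativity can of course be checked directly, but once you know (via Proposition~\ref{prop:deck-centralizer}) that the deck group is $C_2\times C_2$ and that $\Psi_1,\Psi_2$ are distinct nontrivial involutions in it, commutativity is automatic, so there is no need to argue it separately.
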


\begin{proof}
We verify that $\Psi_1$ and $\Psi_2$ really are deck transformations. 
The rest is elementary or follows from Proposition~\ref{prop:deck-centralizer}.
Appealing to well-known properties of the twisted pair $\Psi_1,$ it suffices for us to check that planarity of the scene is preserved:
\[
\det \left(\begin{bmatrix}\Psi_1(\alpha_1)\xx_1 & \Psi_1(\alpha_2)\xx_2 & \Psi_1(\alpha_3)\xx_3 & \Psi_1(\alpha_4)\xx_4 \\ 1 & 1 & 1 & 1 \end{bmatrix}\right) = 0.
\]
Letting $\mathbf{m} = \frac{2}{\tran^\top\tran}\R^\top\tran ,$ we may compute this determinant as follows:
\begin{align*}
\left(\displaystyle\prod_{i=1}^4 (1+\mathbf{m}^\top\alpha_i\xx_i)\right)^{-1} \det\left(\begin{bmatrix} \alpha_1\xx_1 & \alpha_2\xx_2 & \alpha_3\xx_3 & \alpha_4\xx_4 
\\ 1+\mathbf{m}^\top\alpha_1\xx_1 & 1+\mathbf{m}^\top\alpha_2\xx_2 & 1+\mathbf{m}^\top\alpha_3\xx_3 & 1+\mathbf{m}^\top\alpha_4\xx_4 \end{bmatrix}\right) 
&=
\\ 
\left(\displaystyle\prod_{i=1}^4 (1+\mathbf{m}^\top\alpha_i\xx_i)\right)^{-1}\det\left(\begin{bmatrix} \alpha_1\xx_1 & \alpha_2\xx_2 & \alpha_3\xx_3 & \alpha_4\xx_4 \\ 1 & 1 & 1 & 1 \end{bmatrix}\right) = 0. 
\end{align*}
For $\Psi_2 ,$ it is clear that planarity of the scene is preserved and that $\Psi_2(\R) \in \SOCC (3).$
If we substitute $\Psi_2(x)$ into the point correspondence constraint in \eqref{eq:4p2v} and take $\bs{X} = \alpha_i \, \xx_i$ in Equation~\eqref{eq:deckX}, then
\[ 
-\beta_i\yy_i = \R\left(2\frac{\mathbf{n}\mathbf{n}^\top}{\mathbf{n}^\top\mathbf{n}} - \mathtt{I}\right)\alpha_i\xx_i + \left(-\tran - \R\alpha_i\xx_i - \R\left(2\frac{\mathbf{n}\mathbf{n}^\top}{\mathbf{n}^\top\mathbf{n}} - \mathtt{I}\right)\alpha_i\xx_i\right) = -\R\alpha_i\xx_i - \tran .
\]
We conclude that equations~\eqref{eq:4p2v} are invariant up to sign under application of $\Psi_2 .$
\end{proof}
Finally, we describe a decomposition of $X \to Z$
\begin{equation}\label{4p2v:factor}
X \dashrightarrow Y_1 \dashrightarrow Y_2 \to Z,    
\end{equation}
corresponding to the left-most chain in Figure~\ref{fig:blocks_fields}.
This decomposition makes use of the \deff{calibrated homography matrix} associated to $(\R , \tran)$ and the scene plane:
\begin{equation}
\label{eq:homographyMatrix}
\HH = \R + \displaystyle\frac{1}{d} \tran \nn^\top .
\end{equation}
Up to scale, any $3\times 3$ matrix has the form~\eqref{eq:homographyMatrix}.
On the other hand, any real calibrated homography matrix has an eigenvalue equal to $1$ (see eg.~\cite[Lemma 5.18]{ma2012invitation}), and thus lies on an irreducible hypersurface of degree $6$:
\[
\calH_1 = \{ \HH \in \CC^{3\times 3} \mid \det (\HH^\top \HH - \II ) = 0 \}.
\]
In our decomposition, we may take
\[
Y_1 = \{ \left(\HH , \left(
\left(\xx_1 ,\ldots , \xx_4 \right)
,
\left(\yy_1, \ldots, \yy_4
\right)
\right) \right) \in \calH_1 \times \left(\PP^2 \right)^4 \times \left( \PP^2 \right)^4 \mid \xx_i \sim \HH \yy_i , \, i =1 , \ldots 4 \}.
\]
Here we use the standard notation $\sim$ indicate that two vectors are equal up to scale.
We note that each of these correspondence constraints is equivalent to the vanishing of three homogeneous, non-independent linear equations
\[
\matrix{\xx_i}_\times\HH\yy_i = 0 .
\]
A short calculation reveals that $x\in X_z$ and $\Psi_1\circ\Psi_2(x) \in X_z$ map to the same point in $Y_1.$
We also note that $Y_1$ is irreducible, since its Zariski-open in the graph of $\ratnoname{\calH_1 \times \left (\PP^2\right)^4}{\left(\PP^2\right)^4}.$

The projection $Y_1 \to Z$ has a deck transformation given by the sign-symmetry $\HH \mapsto - \HH .$
To remove this last symmetry, we define
\begin{align}
\label{eq:Smatrix}
s &= \displaystyle\frac{1}{\HH_{1,1}^2} \nonumber \\
\SS &= \displaystyle\frac{1}{\HH_{1,1}} \, \HH 
\end{align}
and take $\ratnoname{Y_1}{Y_2} \subset \CC^9 \times \left(\PP^2\right)^4 \times \left(\PP^2\right)^4$ by mapping $\HH$ to $s$ and the 8 non-constant entries of $\SS.$
Algebraically, the ideal
\begin{equation}
\label{eq:Sideal}
\langle 
\det (\SS^\top \SS - s \II ), \, 
\matrix{\xx_1}_\times\SS \yy_1, \,
\matrix{\xx_2}_\times\SS \yy_2,\, 
\matrix{\xx_3}_\times\SS \yy_3, \,
\matrix{\xx_4}_\times\SS \yy_4
\rangle 
\end{equation}
has dimension $0$ and degree $3 = \deg (Y_2 /Z)$ for generic data $(\xx_1, \ldots , \yy_4 ) \in Z.$
The algebraic complexity as captured by the Galois group matches that of a well-known algorithm for computing $\HH,$ in which one must compute the singular values of a $3\times 3$ matrix $\lambda \HH$ recovered up to scale from the four point correspondences (see eg.~\cite[Algorithm 4.1]{HZ-2003} or~\cite[Algorithm 5.2]{ma2012invitation}).

\subsection{Three-view homography}
\label{subsec:3viewhomography}
Finally, we consider the minimal problem $\threeviewhomography $, where the task is to recover the relative orientation of three cameras from the input data of four point correspondences which lie on the incidence variety
\[
Z = \{ (\xx_1, \, \ldots , \, \xx_4, \, \yy_1 , \, \ldots , \, \yy_4, \, \zz_1 , \, \ldots , \, \zz_4) \in \left(\PP^2 \right)^{12}
\mid \xx_3 \in \agline{\xx_1}{\xx_2}, \, \yy_3 \in \agline{\yy_1}{\yy_2}, \, \zz_3 \in \agline{\zz_1}{\zz_2} \}. 
\]
Unlike in the previous section, there is no longer a twisted pair symmetry.
However, there are two symmetries analogous to the deck transformation $\Psi _2 .$
For this problem, the joint camera map defined in~\cite{PLMP} is birationally equivalent to a branched cover whose fibers are pairs of homography matrices, which are compatible in the sense that they share the same normal vector.
Thus, the solutions of interest lie on the subvariety $\mathcal{H}_2 \subset \left(\CC^{3\times 3}\right)^2$ defined to be the closed image of the map
\begin{align*}
\left(\SOCC (3) \right)^2 \times \left(\CC^3 \right)^3 &\rightarrow \left(\CC^{3\times 3}\right)^2\\
(\R_1, \R_2, \tran_1, \tran_2, \nn ) &\mapsto (R_1 + \tran_1 \nn^\top , R_2 + \tran_2 \nn^\top). 
\end{align*}
Notice that, unlike in \eqref{eq:homographyMatrix}, we have absorbed the constant $\frac{1}{d}$ into $\tran$ for each homography matrix.
We wish to compute the fibers of the branched cover $X \to Z,$ where
\begin{equation}
\label{eq:3way-homography-equations}
X = \{
\left( (\HH_1, \, \HH_2), \, (\xx_1, \, \ldots , \, \zz_4)\right) \in \mathcal{H}_2 \times Z
\mid \xx_i \sim \HH_1 \,  \yy_i \sim \HH_2 \, \zz_i , \, i=1,\ldots 4 \}.
\end{equation}
For this problem, we have $\deg (X/Z) = 64,$ and Result~\ref{res:plmp} tells us $\mon (X/Z) \cong S_2 \wr (S_2 \wr S_{16} \cap A_{32}) \cap A_{64}.$
It follows that there exists a decomposition
\[
X \dashrightarrow Y_1 \dashrightarrow Y_2 \dashrightarrow Z
\]
with $\deg (X/Y_1) =  \deg (Y_1 /Y_2) = 2$ and $\deg (Y_2/ Z) = 16.$
The deck transformations of $X \to Z$ are easily seen to be $(\HH_1 , \HH_2)\mapsto (\pm \HH_1 , \pm \HH_2).$
Thus, we may use separating invariants for this linear group action as in the previous section to write down the maps $\ratnoname{X}{Y_1}$ and $\ratnoname{Y_1}{Y_2}.$

However, our description of $X$ is unsatisfying from the point of view of constructing polynomial solvers, since we have only described $\calH_2$ parametrically.
We leave determining the ideal $\mathcal{I}_{\calH_2}$ as a challenging open problem in algebraic vision, analogous to previous works~\cite{MULTIVIEWIDEAL,TRIFOCALIDEAL}.
Our final Result~\ref{res:degree64} is a partial solution to this implicitization problem, which describes an ideal contained in $\mathcal{I}_{\calH_2}.$ 
The generators of this ideal and the linear correspondence constraints in~\eqref{eq:3way-homography-equations}
generate a $0$-dimensional ideal of  degree $64$ for generic data $z=(\xx_i, \yy_i, \zz_i).$

Drawing on the description of $\calH_1$ from the previous section, consider the map
\begin{align*}
\calH_2 &\rightarrow \left(\CC^{3\times 3}\right)^2 \\
(\HH_1,\HH_2) &\mapsto (\HH_1^\top \HH_1 - \II , \HH_2^\top \HH_2 - \I).
\end{align*}
The image of this map has the alternate parametrization
\begin{align*}
\left(\CC^{3\times 1}\right)^3 &\rightarrow \left(\CC^{3\times 3}\right)^2\\
(\dd_1, \dd_2, \nn) &\mapsto (\nn \dd_1^\top + \dd_1 \nn^\top, \nn \dd_2^\top + \dd_2 \nn^\top ).
\end{align*}
Using Macaulay2, we compute implicit equations in new matrix variables $\WW_i = \nn \dd_i^\top + \dd_i \nn^\top,$ $i=1,2.$
The resulting elimination ideal in $\CC [\WW_1, \WW_2]$ is generated by four cubics and 15 quartics.
The cubic constraints obtained are 
\begin{equation}\label{eq:cubicW}
\det (\WW_1) = \det (\WW_2) = \det (\WW_1 + \WW_2) = \det (\WW_1 - \WW_2) = 0.
\end{equation}
These cubics can be understood in terms of the 
alternate parametrization, which shows that generic $(\WW_1, \WW_2)$ in the image will span a pencil of rank-$2$ symmetric matrices.
In what in follows, it is enough for us to consider two of the 15 quartics, which have alternate expressions in terms of resultants: 
\begin{equation} \label{eq:resultants}
\begin{split}
\mathrm{Res}_{n_1} \left(\WW_1^{3,3}n_1^2 -2\WW_1^{1,3}n_1 + \WW_1^{1,1}, \WW_2^{3,3}n_1^2 -2\WW_2^{1,3}n_1 + \WW_2^{1,1}\right) = 0\\ \mathrm{Res}_{n_2}\left(\WW_1^{3,3}n_2^2 -2\WW_1^{2,3}n_2 + \WW_1^{2,2}, \WW_2^{3,3}n_2^2 -2\WW_2^{2,3}n_2 + \WW_2^{2,2}\right) = 0
\end{split}
\end{equation}
where $\nn = (n_1, n_2, n_3).$

Substituting $\WW_i = \HH_i^\top \HH_i - \II $ into~\eqref{eq:cubicW} yields four polynomials of degree $6$ vanishing on $\calH_2.$
Using Bertini~\cite{Bertini},  we computed points where these equations vanish by tracking $6^4 = 1296$ homotopy paths. 
Out of these points, $336$ lie on $\calH_2.$
The remaining $960$ paths resulted in $224$ points not on $\calH_2,$ each occurring with multiplicity $4.$
We confirmed that the degree of the variety $\calH_2$ is indeed $336$ using monodromy.

Unlike in the five-point problem, the linear equations implied by $\xx_i \sim \HH_1 \,  \yy_i \sim \HH_2 \, \zz_i $ are non-generic.
The number of solutions to these linear equations and the four degree-$6$ equations obtained from~\eqref{eq:cubicW} is $320.$
Out of these solutions, only $84$ satisfy the degree-$8$ equations obtained from~\eqref{eq:resultants}.
To obtain the degree $64$ reported in~\cite{PLMP}, it is sufficient to impose the additional constraint $\det \HH_1 \ne 0.$
In summary, we have the following result.
\begin{result}
\label{res:degree64}
For each $z=\left( \xx_1, \, \ldots , \, \zz_4\right) \in Z,$ let $I_z \subset  \CC [\HH_1, \HH_2, D] $ be the ideal in $19$ variables generated by the linear relations $\xx_i \sim \HH_1 \,  \yy_i \sim \HH_2 \, \zz_i $ for $i=1, \ldots , 4,$ 
the saturation constraint $D \det \HH_1 -1=0,$ and six equations obtained by
setting $(\WW_1, \WW_2) = (\HH_1^\top \HH_1 - \II , \HH_2^\top \HH_2 - \II ) $ in equations~\eqref{eq:cubicW} and~\eqref{eq:resultants}.
For generic $z\in Z,$ we have $\deg (I_z) = 64.$
Moreover, after substituting $s_1, s_2, \SS_1, \SS_2$ as in~\eqref{eq:Smatrix}, \eqref{eq:Sideal}, we obtain an ideal of degree $16$ for generic data.
\end{result}

\section{Conclusion and outlook}
\label{sec:outlook}

Galois/monodromy groups reveal the intrinsic algebraic structure of problems in enumerative geometry.
We have shown how this structure can be revealed on both new and old examples from computer vision.
We believe that numerical algebraic geometry and computational group theory are valuable additions to the arsenal of techniques used by researchers interested in building minimal solvers.
Although these methods allow us to identify decomposable branched covers, the problem of automatically computing a decomposition as in Equation~\eqref{eq:decomposition} seems hard in general.
We have shown how understanding the underlying geometry allows us to solve it in cases of interest.
Finally, we note that the techniques in this paper might also be applied to non-minimal problems, where the branched covers $\ratnoname{X}{Z}$ of interest are such that $Z$ is a low-dimensional subvariety of some ambient space of data.
In principal, the Galois/monodromy group may be computed by projecting $Z$ onto an affine space of the same dimension and applying Part 2) of Proposition~\ref{prop:factor}.
This, however, lies beyond the scope of our work.

\section*{Acknowledgements}
We are grateful to ICERM (NSF DMS-1439786 and the Simons Foundation grant 507536) for their support and for hosting three events where significant progress on this project was made: the Fall 2018 semester program on Nonlinear Algebra, the Winter 2019 Algebraic Vision research cluster, and the Fall 2020 virtual workshop on Monodromy and Galois groups in enumerative geometry and applications. V.~Korotynskiy and T.~Pajdla were supported by the EU Structural and Investment Funds, Operational Programe Research, Development and Education under the project IMPACT (reg.\ no.\ CZ$.02.1.01/0.0/0.0/15\_003/0000468$), the EU H2020 ARtwin No.~856994, and EU H2020 SPRING No.~871245 Projects. T.~Duff received partial support from NSF DMS \#1719968 and \#2001267. M.~Regan was supported in part by Schmitt Leadership Fellowship in Science and Engineering and NSF grant CCF-1812746.  We are particularly grateful to Jon Hauenstein, Anton Leykin, and Frank Sottile for helpful suggestions. 

\bibliographystyle{amsplain}
\bibliography{refs,local}

\end{document}